\newcommand{\bE}{\mathbf{E}}
\newcommand{\bI}{\mathbf{I}}
\newcommand{\uk}{\underline{k}}
\newcommand{\ul}{\underline{l}}
\newcommand{\us}{\underline{s}}
\newcommand{\ux}{\underline{x}}
\newcommand{\ZZ}{\mathbb{Z}}
\newcommand{\RR}{\mathbb{R}}
\newcommand{\CC}{\mathbb{C}}
\newcommand{\PP}{\mathbb{P}}
\newcommand{\TT}{\mathbb{T}}
\newcommand{\cC}{\mathcal{C}}
\newcommand{\cO}{\mathcal{O}}
\newcommand{\cE}{\mathcal{E}}
\newcommand{\cG}{\mathcal{G}}
\newcommand{\cH}{\mathcal{H}}
\newcommand{\cI}{\mathcal{I}}
\newcommand{\cS}{\mathcal{S}}
\newcommand{\cR}{\mathcal{R}}
\DeclareMathOperator{\Cl}{Cl}
\DeclareMathOperator{\Div}{div}
\DeclareMathOperator{\cone}{cone}
\DeclareMathOperator{\Hom}{Hom}
\DeclareMathOperator{\ini}{in_{>}}
\DeclareMathOperator{\Spec}{Spec}
\DeclareMathOperator{\sat}{sat}
\DeclareMathOperator{\Pic}{Pic}
\theoremstyle{definition}
\newtheorem{definition}{Definition}[section]
\newtheorem{example}[definition]{Example}
\newtheorem{remark}[definition]{Remark}
\theoremstyle{plain}
\newtheorem{proposition}[definition]{Proposition}
\newtheorem{corollary}[definition]{Corollary}
\newtheorem{lemma}[definition]{Lemma}
\begin{document}

\title{Klyachko diagrams of monomial ideals}

\author[Rosa M. Mir\'o-Roig]{Rosa M. Mir\'o-Roig}
\address{Department de matem\`{a}tiques i Inform\`{a}tica, Universitat de Barcelona, Gran Via de les Corts Catalanes 585, 08007 Barcelona,
Spain}
\email{miro@ub.edu}

\author[Marti Salat-Molt\'o]{Mart\'i Salat-Molt\'o}
\address{Department de matem\`{a}tiques i Inform\`{a}tica, Barcelona Graduate School of Mathematics (BGSMath), Universitat de Barcelona, Gran Via de les Corts Catalanes 585, 08007 Barcelona,
Spain}
\email{marti.salat@ub.edu}

\begin{abstract}
In this paper, we introduce the notion of a {\em Klyachko diagram} for a monomial ideal $I$ in a certain multi-graded polynomial ring,
namely the  Cox ring $R$ of a smooth complete toric variety, with irrelevant maximal ideal $B$. We present procedures to compute the Klyachko diagram of $I$ from its monomial generators, and to retrieve the $B-$saturation $I^{\mathrm{sat}}$ of $I$ from its Klyachko diagram. We use this description to compute the first local cohomology module $H^{1}_{B}(I)$. As an application, we find a formula for the Hilbert function of $I^{\mathrm{sat}}$, and a characterization of monomial ideals with constant Hilbert polynomial, in terms of their Klyachko diagram.
\end{abstract}

\thanks{\noindent Corresponding author: Mart\'i Salat-Molt\'o.\\
Acknowledgements: The first author was partially supported by PID2019-104844GB-I00. The second author is partially supported by  MDM-2014-0445-18-2.}

\keywords{Monomial ideals, Cox ring, Klyachko filtrations, local cohomology, Hilbert function, Hilbert polynomial}

\maketitle

\tableofcontents

\markboth{}{}


\section{Introduction}


Lying in the crossroads of commutative algebra and combinatorics, monomial ideals play a prominent role in the study of ideals in a polynomial ring $R$. Indeed, many properties of arbitrary ideals $I \subset R$ are reduced to the monomial case, which can often be tackled using combinatorial tools. For instance, it is a classical result due to Macaulay in \cite{Macaulay}, that the Hilbert function of an ideal $I \subset R$ coincides with the Hilbert function of its {\em initial ideal} $\ini(I)$, which is itself a monomial ideal (see for instance \cite[Theorem~15.3]{EisenbudBook}). Since the advent of combinatorial commutative algebra, the theory of monomial ideals has been linked with various topics in discrete mathematics, such as enumerative combinatorics, graph theory, simplicial geometry or lattice polytopes (see \cite{Eliahou-Kervaire, Hei-Rat-Shah, Bayer-Sturmfels, Taylor, Herzog-Hibi, Faridi, Eagon-Reiner}).

The aim of this paper is to introduce the {\em Klyachko diagram} of a monomial ideal, which can be seen as a generalization of the classical {\em staircase diagram}, suited to study monomial ideals inside non-standard graded polynomial rings. More precisely, we focus on the polynomial ring
$R=\CC[x_1, \ldots, x_r]$ graded by the class group $\Cl(X) \cong \ZZ^{\ell}$
of a smooth complete toric variety $X$. That is, $r = |\Sigma(1)|$
is the number of rays of the fan $\Sigma$ of $X$, and the degree of a variable
$x_i$ is the class in $\Cl(X)$ of the torus-invariant Weil divisor $D_{\rho_{i}}$ corresponding to the ray $\rho_i$.
The graded ring $R$ can be considered as the Cox ring of the toric variety $X$, and it appears in the construction of a toric variety by a GIT quotient \cite[Chapter 5]{CLS}.
For instance, if we consider $X=\PP^{r-1}$, we recover the polynomial ring with its classical $\ZZ$-grading.
 
Apart from the Cox ring being a generalization of the classical $\ZZ-$graded polynomial ring, $\Cl(X)-$gra\-ded $R-$modules correspond to quasi-coherent sheaves on $X$. In particular, a $\Cl(X)-$gra\-ded ideal $I\subset R$ corresponds to an ideal sheaf $\cI$ on $X$ such that 
\[
H^{0}_{\ast}(X,\cI)=\bigoplus_{\alpha\in\Cl(X)}H^{0}(X,\cI(\alpha))\cong (I:B^{\infty})=I^{\sat},
\]
where $B$ is the {\em irrelevant ideal} of $X$. It is a monomial maximal ideal determined combinatorially by the fan $\Sigma$ of the toric variety $X$. A $\Cl(X)-$graded $R-$module $E$ gives rise to an equivariant sheaf if and only if $E$ is $\ZZ^{r}-$graded (also called {\em fine-graded}). In particular, equivariant ideal sheaves on $X$ are in correspondence to monomial ideals in $R$.

In \cite{Kly89} and \cite{Kly91}, Klyachko classified equivariant torsion-free sheaves on $X$ in terms of filtered collections of vector spaces. These filtered collections, parameterized by the cones of the fan $\Sigma$, are often referred to in the literature as {\em Klyachko filtrations} (see Proposition \ref{Proposition:Torsionfrees}). In \cite{Perling}, this device was formalized by Perling, who introduced the notion of a $\Sigma-$family, obtaining a general classification of equivariant quasi-coherent sheaves. From a geometrical point of view, these methods have been used in the last two decades to study equivariant vector bundles on toric varieties (see \cite{Her-Mus-Pay, Pay, Das-Dey-Khan, Di-Rocco-Jabbusch-Smith}). On the other hand, in \cite{MR-S}, the present authors used the theory of $\Sigma-$families to study reflexive $\Cl(X)-$gra\-ded $R-$modules from a commutative algebra perspective. 

In this note, we use this construction to introduce the Klyachko diagram of a monomial ideal $I\subset R$: a family of staircase-like diagrams parametrized by the cones of $\Sigma$ encoding algebraic properties of $I$ (see for instance Example \ref{Example:Klyachko diagram algorithm} and Figure \ref{Fig:PP2alt}). In particular, the Klyachko diagram is uniquely determined by the ideal $I$, up to $B-$saturation. We give procedures to compute the Klyachko diagram using the monomial generators of $I$ as initial data and conversely, to determine the generators of a $B-$saturated ideal $I^{\sat}$ from a given Klyachko diagram $\{(\cC^{\sigma}_{I},\Delta^{\sigma}_{I})\}_{\sigma\in\Sigma}$. We also provide a method to compute the first local cohomology module $H^{1}_{B}(I)$ with respect to $B$ from the diagram $\{(\cC^{\sigma}_{I},\Delta^{\sigma}_{I})\}_{\sigma\in\Sigma}$, which measures the saturatedness of $I$. We then use the Klyachko diagram to give a formula for the $\Cl(X)-$graded Hilbert function of $I^{\sat}$ in terms of lattice polytopes. Finally, we characterize monomial ideals $I$ with constant Hilbert polynomial in terms of their Klyachko diagram.

\vspace{3mm}
Next we explain how this paper is organized. Section \ref{Section:Preliminaries} contains all the preliminary results and definitions needed for the rest of this work, and it is divided in two parts. In Subsection \ref{Section:Toric varieties}, we recall the notation and basic results concerning toric varieties. In Subsection \ref{Section:Preliminaries Klyachko filtrations for modules}, we recall the theory of Klyachko filtrations.

The remaining two sections are the main body of the paper. In Section \ref{Section:Klyachko diagrams}, we define the Klyachko diagram of a monomial ideal, and we establish its main properties. In Subsection \ref{Section:From ideal to Klyachko diagram}, we present a procedure to obtain the Klyachko diagram from the generators of a given monomial ideal $I$, and we prove that it describes the collection of Klyachko filtrations of $I$ (Proposition \ref{Prop:From ideal to diagram}). As a corollary, we show how the Klyachko diagram of the sum of two monomial ideals can be computed. Conversely, in Subsection \ref{Section:From Klyachko diagram to ideal}, we give a method to obtain a minimal set of generators of a $B-$saturated monomial ideal corresponding to a given Klyachko diagram. Finally in Subsection \ref{Section:HH1}, we use our previous results to compute the first local cohomology module $H^{1}_{B}(I)$ (Proposition \ref{Corollary:local HH1}) which measures how different $I$ and $I^{\sat}$ are. In the last part of this note, we give a formula for the Hilbert function of a $B-$saturated monomial ideal in terms of its Klyachko diagram (Proposition \ref{Prop:Hilbert function and Klyachko diagram}), and we finish characterizing the Klyachko diagram of monomial ideals with a constant Hilbert polynomial (Corollary \ref{Corollary:HilbPoly}). In particular, we characterize all one dimensional monomial ideals $I\subset R$ in terms of the Klyachko diagram.


\section{Preliminaries}\label{Section:Preliminaries}
In this section, we gather the basic notations, definitions and results about toric varieties needed in the sequel. We recall the notion of a $\Sigma-$family of an equivariant torsion-free sheaf, as introduced in \cite{Perling}, and we end specializing it to the setting of equivariant ideal sheaves.
\subsection{Toric varieties}\label{Section:Toric varieties}

Let $X$ be an $n-$dimen\-sional smooth complete toric variety with torus $\TT_{N}\cong(\CC^{\ast})^{n}$, associated to a fan $\Sigma\subset N\otimes\RR\cong\RR^{n}$, where $N\cong\ZZ^{n}$ is the cocharacter lattice of $\TT_{N}$. We denote by $\Sigma(k)$ (respectively $\sigma(k)$) the set of $k-$dimensional cones in $\Sigma$ (respectively in $\sigma$). We refer to the cones $\rho\in\Sigma(1)$ as {\em rays} and we set $n(\rho)\in N$ to be the first non-zero lattice point along $\rho$. We denote by $M=\Hom(N,\ZZ)\cong\ZZ^{n}$ its {\em character lattice} and for $m\in M$, we set $\chi^{m}:\TT_{N}\rightarrow\CC^{\ast}$ the corresponding algebraic group homomorphism. For any cone $\sigma\in\Sigma$, let $\sigma^{\vee}$ be its {\em dual cone}, let $S_{\sigma}:=\sigma^{\vee}\cap M$ be the associated semigroup of characters and $\CC[S_{\sigma}]$ the corresponding $\CC-$algebra. Then $U_{\sigma}=\Spec(\CC[S_{\sigma}])\subset X$ is a $\TT_{N}-$invariant affine subvariety of $X$. For any two cones $\tau\prec \sigma\in \Sigma$, there is a character $m\in M$ such that $S_{\tau}=S_{\sigma}+\ZZ\langle m\rangle$ and we have an inclusion $U_{\tau}\hookrightarrow U_{\sigma}$ given by the natural morphism of $\CC-$algebras $\CC[S_{\sigma}]\hookrightarrow \CC[S_{\sigma}]_{\chi^{m}}=\CC[S_{\tau}]$.

There is a bijection between rays $\rho \in \Sigma(1)$ and $\TT_{N}-$invariant Weil divisors $D_{\rho}$. Furthermore, the $\TT_{N}-$invariant Weil divisors generate the class group $\Cl(X)$ of $X$. Indeed, we have the exact sequence
\begin{equation}\label{Eq:Class group exact sequence}
0\rightarrow M\xrightarrow{\phi}\bigoplus_{\rho\in\Sigma(1)}\ZZ D_{\rho}\xrightarrow{\pi} \Cl(X)\rightarrow 0,
\end{equation}
where $\phi(m)=\Div(\chi^{m})=\sum_{\rho\in\Sigma(1)}\langle m, n(\rho)\rangle D_{\rho}$, for any character $m\in M$; and $\pi(D)=[D]\in \Cl(X)$ the class of an invariant Weil divisor $D$. Hence, $\Cl(X)$ is a finitely generated abelian group. (See \cite[Theorem 4.1.3]{CLS}).
 
Let $R=\CC[x_{\rho}\;|\;\rho\in\Sigma(1)]$ be a polynomial ring in $|\Sigma(1)|$ variables. The {\em Cox ring} of $X$ is the $\CC-$algebra $R$ endowed with a grading, not necessarily standard,  given by the class group $\Cl(X)$ of $X$. We set $\deg(x_{\rho}):=[D_{\rho}]\in\Cl(X)$, for each ray $\rho\in\Sigma(1)$. We write $R=\CC[x_{1},\dotsc,x_{r}]$ whenever $\Sigma(1)=\{\rho_{1},\dotsc,\rho_{r}\}$ is the (ordered) set of rays of $\Sigma$. For a cone $\sigma$, we set 
\[
x^{\hat{\sigma}}:=\prod_{\rho_{i}\in \Sigma(1)\setminus \sigma(1)}x_{i},\quad\text{and}\quad B:=\langle x^{\hat{\sigma}}\;\mid\;\sigma\in\Sigma\rangle.
\]
$B$ is called the {\em irrelevant ideal}. In fact, one has $B=\langle x^{\hat{\sigma}}\;\mid\;\sigma\in\Sigma_{\max}\rangle$. 

\begin{remark}
In general, the Cox ring can be defined for any variety $X$ as the ring
\[
\cR(X)=\bigoplus_{[D]\in\Pic(X)}H^{0}(X,\cO(D)).
\]
In the special case when $X$ is a smooth toric variety it coincides with the polynomial ring we defined above.
\end{remark}

\begin{example} (i) \label{Example:Projective space and Hirzebruch}
$\PP^{n}$ is a toric variety of dimension $n$. Let $\{e_{1},\dotsc,e_{n}\}$ be a basis of $N=\ZZ^{n}$. The fan $\Sigma$ associated to $\PP^{n}$ has $n+1$ rays: $\rho_{0}=\cone(-e_{1}-\dotsb-e_{n})$ and $\rho_{i}=\cone(e_{i})$ for $1\leq i\leq n$; and $n+1$ maximal cones $\sigma_{0}:=\cone(e_{1},\dotsc,e_{n})$ and $\sigma(i):=\cone(\{e_{j}\;\mid\;j\neq i\}\cup\{-e_{1}-\dotsb-e_{n}\})$ for $1\leq i\leq n$. Its  associated Cox ring is $\CC[x_{0},\dotsc,x_{n}]$ with $\deg(x_{i})=1$ for $0\leq i\leq n$, and its irrelevant ideal is $B =\langle x_{0},\dotsc,x_{n}\rangle$.

(ii) For $a\geq0$, the Hirzebruch surface $\cH_{a}=\PP(\cO_{\PP^{1}}\oplus\cO_{\PP^{1}}(a))$ is a toric surface.
Let $N=\ZZ^{2}$ be a lattice with $\{e,f\}$ its standard basis, and set $u_{0}:=-e+af$, $u_{1}:=e$, $v_{0}:=-f$ and $v_{1}:=f$. The fan $\Sigma$ associated to $\cH_{a}$ has four rays $\rho_{0}=\cone(u_{0})$, $\rho_{1}=\cone(u_{1})$, $\eta_{0}=\cone(v_{0})$ and $\eta_{1}=\cone(v_{1})$; and four maximal cones $\sigma_{00}=\cone(u_{1},v_{1})$, $\sigma_{01}=\cone(u_{1},v_{0})$, $\sigma_{10}=\cone(u_{0},v_{1})$ and $\sigma_{11}=\cone(u_{0},v_{0})$. Its Cox ring is $\CC[x_{0},x_{1},y_{0},y_{1}]$ with $\deg(x_{0})=\deg(x_{1})=(1,0)$, $\deg(y_{0})=(0,1)$ and $\deg(y_{1})=(-a,1)$; and its irrelevant ideal is $B(\Sigma)=\langle x_{1}y_{1}, x_{1}y_{0}, x_{0}y_{1}, x_{0}y_{0} \rangle$.
\end{example}

For any cone $\sigma\in \Sigma$, the localization of $R$ at $x^{\hat{\sigma}}$ is a $\Cl(X)-$graded algebra $R_{x^{\hat{\sigma}}}$. For any Weil divisor $D = \sum_{\rho \in \Sigma(1)}a_{\rho}D_{\rho}$, there is an isomorphism between $\CC[S_{\sigma}]$ and the homogeneous $[D]-$graded piece $(R_{x^{\hat{\sigma}}})_{[D]}$ sending $\chi^{m}\in\CC[S_{\sigma}]$ to the monomial $\ux^{m+D}:=\prod_{\rho \in \Sigma(1)} x_{\rho}^{\langle m, \rho\rangle+a_{\rho}}\in(R_{x^{\hat{\sigma}}})_{[D]}$. We have the following: 

\begin{proposition} \label{Proposition:line bundles and quasi-coherent sheaves} \begin{itemize}
\item[(i)] For any $\alpha \in \Cl(X)$, there is a natural isomorphism $R_{\alpha} \cong \Gamma(X,\mathcal{O}_{X}(D))$ for any Weil divisor $D = \sum_{\rho} a_{\rho}D_{\rho}$ such that $\alpha = [D]$.

\item[(ii)] If $E$ is a $\Cl(X)-$graded $R$-module, there is a quasi-coherent sheaf $\widetilde{E}$ on $X$ such that $\Gamma(U_{\sigma},\widetilde{E}) = (E_{x^{\hat{\sigma}}})_{0}$, for any $\sigma \in \Sigma$.

\item[(iii)] If $\cE$ is a quasicoherent sheaf on $X$, there is a $\Cl(X)-$graded $R$--module such that $\widetilde{E}=\cE$. $\widetilde{E}$ is coherent if and only if $E$ is finitely generated. 

\item[(iv)] $\widetilde{E}=0$ if and only if $B^{l}E=0$ for all $l\gg 0$.

\item[(v)] There is an exact sequence of $\Cl(X)-$graded modules
\[
0\rightarrow H^{0}_{B}(E)\rightarrow E \rightarrow H^{0}_{\ast}(X,\widetilde{E})\rightarrow H^{1}_{B}(E)\rightarrow0.
\]
\end{itemize}

\end{proposition}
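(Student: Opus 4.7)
The plan is to prove the five parts in order, largely by invoking standard results from the Cox-ring formalism (cf.\ \cite[Chapter 5]{CLS}) and adapting them to the notation already set up in the excerpt.

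For (i), I would start from the exact sequence (\ref{Eq:Class group exact sequence}). Given $\alpha\in\Cl(X)$ and a Weil divisor $D=\sum a_\rho D_\rho$ with $[D]=\alpha$, the global sections of $\mathcal{O}_{X}(D)$ are spanned by characters $\chi^{m}$ satisfying $\langle m,n(\rho)\rangle+a_{\rho}\geq 0$ for every ray. The assignment $\chi^{m}\mapsto \ux^{m+D}$ is precisely the bijection with the monomials of $R_{\alpha}$, and it is straightforward to check that linearly equivalent divisors yield the same image via (\ref{Eq:Class group exact sequence}), so the isomorphism depends canonically only on the class $\alpha$.

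For (ii), for each $\sigma\in\Sigma$ I would define $\widetilde{E}|_{U_{\sigma}}$ to be the quasi-coherent sheaf on $U_{\sigma}=\Spec\CC[S_{\sigma}]$ associated to the $\CC[S_{\sigma}]$-module $(E_{x^{\hat{\sigma}}})_{0}$, using the isomorphism $\CC[S_{\sigma}]\cong (R_{x^{\hat{\sigma}}})_{0}$ recalled in the excerpt. The gluing on $U_{\tau}\hookrightarrow U_{\sigma}$ for $\tau\prec\sigma$ reduces to checking that the character $m$ with $S_{\tau}=S_{\sigma}+\ZZ\langle m\rangle$ corresponds to the unit $x^{\hat{\tau}}/x^{\hat{\sigma}}\in (R_{x^{\hat{\tau}}})_{0}$, which is immediate from the degree combinatorics. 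Part (iii) is then built by taking $E=\bigoplus_{\alpha\in\Cl(X)}\Gamma(X,\cE(\alpha))$: on each $U_{\sigma}$, this recovers the module of sections, and finite generation vs.\ coherence follows from the fact that $R$ is Noetherian together with the local description of $\widetilde{E}$.

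For (iv), the claim $\widetilde{E}=0$ amounts to $(E_{x^{\hat{\sigma}}})_{0}=0$ for every $\sigma\in\Sigma$. Because $U_{\sigma}$ is affine and smooth, $\Pic(U_{\sigma})=0$, so the twist by any class becomes trivial on $U_{\sigma}$ and $(E_{x^{\hat{\sigma}}})_{\alpha}=0$ for every $\alpha$, i.e.\ $E_{x^{\hat{\sigma}}}=0$ for every maximal cone $\sigma\in\Sigma_{\max}$. Thus each generator $x^{\hat{\sigma}}$ of $B$ annihilates every element of $E$ up to some power; conversely, if $B^{l}E=0$, every $x^{\hat{\sigma}}$ acts nilpotently and all localizations vanish. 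The only subtlety here is that $l$ can be chosen uniformly, which requires finite generation (or, equivalently, is to be read element-wise).

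Part (v) I would deduce from the standard comparison of local and sheaf cohomology. Writing $U=\Spec R\setminus V(B)$ one has a GIT quotient $U\to X$ identifying $\Gamma(U,\widetilde{E})$ with $H^{0}_{\ast}(X,\widetilde{E})$, and the long exact sequence
\[
0\rightarrow H^{0}_{B}(E)\rightarrow E \rightarrow \Gamma(U,\widetilde{E})\rightarrow H^{1}_{B}(E)\rightarrow H^{1}(U,\widetilde{E})\rightarrow\cdots
\]
yields the desired four-term sequence after truncation. Concretely, I would use the Čech complex for the cover $\{D(x^{\hat{\sigma}})\}_{\sigma\in\Sigma_{\max}}$: its cohomology computes $H^{i}_{B}(E)$ in negative Čech degrees and $H^{i-1}(X,\widetilde{E}(\alpha))$ in nonnegative ones, and the connecting map is precisely $E\rightarrow H^{0}_{\ast}(X,\widetilde{E})$. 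The main obstacle throughout is the bookkeeping between the grading on $R$-modules and the sheafified picture, especially the fact (used in (ii) and (iv)) that on each affine toric chart $U_{\sigma}$ all class-group twists become trivial; once this is pinned down, the rest follows from the general Serre-style correspondence.
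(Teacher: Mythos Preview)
Your proposal is correct and in fact does more than the paper's own proof, which consists entirely of citations: (i)--(iv) are referred to \cite[Propositions 5.3.3, 5.3.6, 5.3.7, 5.3.10]{CLS} and (v) to \cite[Proposition 2.3]{Eis-Mus-Sti}. The arguments you sketch are precisely the content of those references, so you are on the same track, just with the details unpacked; the only caveats worth flagging are the ones you already noted (uniformity of $l$ in (iv) needs finite generation, and in (iii) the ``if and only if'' should be read as a statement about the functor $\widetilde{(\,\cdot\,)}$, not about the particular choice $E=H^{0}_{\ast}(X,\cE)$).
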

\begin{proof}
(i)--(iv) follow from  \cite[Proposition 5.3.3, Proposition 5.3.6, Proposition 5.3.7 and Proposition 5.3.10]{CLS}. (v) follows from \cite[Proposition 2.3]{Eis-Mus-Sti} 
\end{proof}

The module $\Gamma E = H^{0}_{\ast}(X,\widetilde{E})$ is called the {\em $B-$saturation} of $E$. We say that $E$ is {\em $B-$saturated} if $E\cong \Gamma E$, or equivalently if $H^{0}_{B}(E)=H^{1}_{B}(E)=0$. If $H^{0}_{B}(E)=(0:_{E}B^{\infty})=0$, we say that $E$ is $B-$torsion free.


\subsection{Equivariant sheaves and Klyachko filtrations}
\label{Section:Preliminaries Klyachko filtrations for modules}
Let $X$ be a smooth complete toric variety with fan $\Sigma$ and $R=\CC[x_{1},\dotsc,x_{r}]$ its associated $\Cl(X)-$graded Cox ring. In this subsection, we introduce the notion of a $\Sigma-$family to describe equivariant sheaves on $X$. We refer the reader to \cite{Perling} and \cite{Kly89} for further details. 
\begin{definition}
For any $t\in\TT_{N}$, let $\mu_{t}:X\rightarrow X$ be the morphism given by the action of $\TT_{N}$ on $X$. A quasi-coherent sheaf $\cE$ on $X$ is {\em equivariant} if there is a family of isomorphisms $\{\phi_{t}:\mu_{t}^{\ast}\cE\cong\cE\}_{t\in\TT_{N}}$ such that $\phi_{t_{1}\cdot t_{2}}=\phi_{t_{2}}\circ\mu_{t_{2}}^{\ast}\phi_{t_{1}}$ for any $t_{1},t_{2}\in\TT_{N}$.
\end{definition}
Notice that any $\ZZ^{r}-$graded $R-$module is also $\Cl(X)-$graded. In \cite{Batyrev-Cox}, Batyrev and Cox proved the following result:
\begin{proposition}\label{Proposition:Multigraded-Equivariant}
Let $E$ be a $\Cl(X)-$graded $R-$module. The quasi-coherent sheaf $\widetilde{E}$ is equivariant if and only if $E$ is also $\ZZ^{r}-$graded.
\end{proposition}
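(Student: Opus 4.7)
The plan is to translate both directions into statements about compatible algebraic torus actions. Dualizing the exact sequence \eqref{Eq:Class group exact sequence} via $\Hom(-,\CC^{\ast})$ yields an exact sequence of algebraic tori
\[
1 \rightarrow G \rightarrow T \rightarrow \TT_{N} \rightarrow 1,
\]
where $T = (\CC^{\ast})^{r}$ and $G = \Hom(\Cl(X),\CC^{\ast})$. Under the GIT presentation $X = (\CC^{r}\setminus V(B))/\!/G$ from \cite[Chapter 5]{CLS}, the $\TT_{N}$-action on $X$ is identified with the residual $T/G$-action. On modules, this dictionary says that a $\Cl(X)$-grading on $E$ is equivalent to a compatible $G$-action, and a $\ZZ^{r}$-grading is equivalent to a lift of that $G$-action to a $T$-action on $E$.

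For the forward implication, assume $E$ is $\ZZ^{r}$-graded, so that $T$ acts on $E$. For each $\sigma \in \Sigma$ the $G$-invariants
\[
\Gamma(U_{\sigma},\widetilde{E}) = (E_{x^{\hat{\sigma}}})_{0}
\]
inherit a residual action of $T/G \cong \TT_{N}$, that is, an $M$-grading compatible with the $M$-grading on $\CC[S_{\sigma}]$. These local $\TT_{N}$-actions are compatible along the restriction maps for $\tau \prec \sigma$ (since they all come from the same global $T$-action on $E$), which is precisely the datum of an equivariant structure on $\widetilde{E}$.

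For the converse, assume $\widetilde{E}$ is equivariant. The $\TT_{N}$-action on $\Gamma(U_{\sigma},\widetilde{E})$ decomposes it into weight spaces indexed by $M$. Applying the same observation to every twist $\widetilde{E(\alpha)} = \widetilde{E}(\alpha)$, which is equivariant because $\cO_{X}(\alpha)$ is, provides an $M$-grading on each graded piece $(E_{x^{\hat{\sigma}}})_{\alpha} = \Gamma(U_{\sigma},\widetilde{E}(\alpha))$. Compatibility of these gradings under the localizations associated to $\tau \prec \sigma$ allows one to glue them into a single $M$-grading on $E_{\alpha}$ (after replacing $E$ by its $B$-saturation if necessary, which does not alter $\widetilde{E}$). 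Each weight space lies in a coset $\pi^{-1}(\alpha) \subset \ZZ^{r}$, so the combined data produce a decomposition $E = \bigoplus_{a \in \ZZ^{r}} E_{a}$ refining the $\Cl(X)$-grading, and the $R$-module structure is compatible because each $x_{\rho}$ is a $T$-eigenvector of weight $e_{\rho} \in \ZZ^{r}$.

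The hard part will be this last gluing step: verifying that the local $M$-gradings on the patches $(E_{x^{\hat{\sigma}}})_{\alpha}$ agree in such a way as to cut out a well-defined global $M$-grading on $E_{\alpha}$. This relies on the cocycle compatibility built into the equivariant structure across the restriction maps $U_{\tau} \hookrightarrow U_{\sigma}$, and on a careful treatment of $B$-torsion, where passage to $E^{\sat}$ leaves $\widetilde{E}$ unchanged while ensuring that gradings on the localizations genuinely descend to a grading on $E$.
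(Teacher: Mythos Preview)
The paper's own proof is a one-line citation, ``See \cite[Proposition~4.17]{Batyrev-Cox}'', so there is nothing to compare against beyond the Batyrev--Cox argument itself. Your forward direction is the standard correct argument: a $\ZZ^{r}$-grading on $E$ is a $T$-action lifting the $G$-action, and taking $G$-invariants on each affine chart produces compatible $\TT_{N}=T/G$-actions on sections, which is exactly an equivariant structure on $\widetilde{E}$.

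Your converse, however, does not prove the proposition as literally stated---and it cannot, because that direction is false without a saturation hypothesis. Take $X=\PP^{1}$, $R=\CC[x_{0},x_{1}]$, $E=R/(x_{0}+x_{1},\,x_{0}^{2})$. This is $\Cl(X)=\ZZ$-graded and annihilated by $B^{2}$, so $\widetilde{E}=0$ is trivially equivariant; yet $E$ admits no $\ZZ^{2}$-grading as an $R$-module, since $x_{0}\cdot 1$ and $x_{1}\cdot 1$ would be forced into distinct bidegrees while in $E$ they are nonzero scalar multiples of each other. You tacitly concede this when you write ``after replacing $E$ by its $B$-saturation if necessary'': that replacement changes $E$, so what you are actually arguing toward is the correct sheaf-level statement (which is what Batyrev--Cox establish and what the paper uses), namely that $\widetilde{E}$ is equivariant if and only if $\widetilde{E}\cong\widetilde{E'}$ for some $\ZZ^{r}$-graded module $E'$, equivalently iff the $B$-saturation $H^{0}_{\ast}(X,\widetilde{E})$ carries a $\ZZ^{r}$-grading. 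Your gluing sketch is the right route to that corrected statement; to complete it you should also make explicit the choice of a set-theoretic section $\Cl(X)\to\ZZ^{r}$ of $\pi$ (a divisor representative for each class), since the $M$-grading on $\Gamma(U_{\sigma},\widetilde{E}(\alpha))$ depends on which equivariant structure one puts on $\cO_{X}(\alpha)$, and these choices must be made coherently across all $\alpha$.
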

\begin{proof}
See \cite[Proposition 4.17]{Batyrev-Cox}.
\end{proof}
In \cite{Kly89} and \cite{Kly91}, Klyachko observed that to any equivariant torsion-free sheaf we can associate a family of filtered vector spaces, the so-called Klyachko filtration. In what follows we recall how this family can be constructed. Let $\cE$ be an equivariant sheaf on $X$ corresponding to a $\ZZ^{r}-$graded module $E$.
For any degree $\alpha\in\Cl(X)$, the exact sequence (\ref{Eq:Class group exact sequence}) endows the homogeneous degree$-\alpha$ piece of $E$ with an $M-$grading:
\[
E_{\alpha}=\bigoplus_{m\in M}E_{z+\phi(m)},\quad\text{for any}\quad z\in\pi^{-1}(\alpha).
\]
Now, for any $\sigma\in \Sigma$ we consider the monomial $x^{\hat{\sigma}}$, and the localized $R_{x^{\sigma}}-$mo\-dule $E_{x^{\sigma}}$ remains $\ZZ^{r}-$graded. As before, for any $\alpha\in\Cl(X)$, $(E_{x^{\hat{\sigma}}})_{\alpha}$ is $M-$graded. In particular, taking $\alpha=0$ we have:
\begin{equation}\label{Eq:Sigma family decomposition}
E^{\sigma}:=(E_{x^{\hat{\sigma}}})_{0}=\bigoplus_{m\in M}(E_{x^{\hat{\sigma}}})_{\phi(m)}=:\bigoplus_{m\in M}E^{\sigma}_{m}.
\end{equation}
Since $(E_{x^{\hat{\sigma}}})_{0}$ is isomorphic to the $\CC[S_{\sigma}]-$module $\Gamma(U_{\sigma},\cE)$, geometrically we can see (\ref{Eq:Sigma family decomposition}) as the isotypical decomposition of $\Gamma(U_{\sigma},\cE)$ into $\TT_{N}-$eigenspaces of sections
\[
\Gamma(U_{\sigma},\cE)=\bigoplus_{m\in M}\Gamma(U_{\sigma},\cE)_{m}.
\]

Recall that the semigroup $S_{\sigma}$ induces a preorder on the character lattice $M$: for any $m,m'\in M$ we say that $m\leq_{\sigma} m'$ if and only if $m'-m\in S_{\sigma}$, or equivalently if $\langle m' - m,u\rangle\geq0$ for all $u\in\sigma$.
For any two characters $m\leq_{\sigma}m'$, the multiplication by $\chi^{m'-m}\in \CC[S_{\sigma}]$ yields the map
\[\chi^{\sigma}_{m,m'}:E^{\sigma}_{m}\rightarrow E^{\sigma}_{m'}.
\]
For any $m\leq_{\sigma}m'\leq_{\sigma}m''$, we have 
\[
\chi^{\sigma}_{m,m}=1\quad\text{and}\quad\chi^{\sigma}_{m,m''}=\chi^{\sigma}_{m',m''}\circ\chi^{\sigma}_{m,m'}.
\]
In particular,
$\chi^{\sigma}_{m,m'}$ is an isomorphism if $m\leq_{\sigma} m'$ and $m'\leq_{\sigma}m$, or equivalently if $m'-m\in\sigma^{\perp}$.
We call $\hat{E}^{\sigma}:=\{E^{\sigma}_{m},\chi_{m,m'}^{\sigma}\}$ a {\em $\sigma-$family} (see \cite[Definition 4.2]{Perling}).

On the other hand, let $\tau\prec\sigma$ be two cones in $\Sigma$ and $m\in M$ the character such that $S_{\tau}=S_{\sigma}+\ZZ\langle m\rangle$. There are isomorphisms $\CC[S_{\tau}]\cong\CC[S_{\sigma}]_{\chi^{m}}$ and $E^{\tau}\cong E^{\sigma}_{\chi^{m}}$ given by the localization at $\chi^{m}$. Thus, we have a morphism $i^{\sigma \tau}:E^{\sigma}\rightarrow E^{\tau}$, corresponding geometrically to the restriction map of section modules $\Gamma(U_{\sigma},\cE)\rightarrow\Gamma(U_{\tau},\cE)$. For any character $m'\in M$, the morphism $i^{\sigma\tau}$ induces a linear map
\[
i^{\sigma \tau}_{m'}:E^{\sigma}_{m'}\rightarrow E^{\tau}_{m'}.
\]
We call $\{\hat{E}^{\sigma}\}_{\sigma\in \Sigma}$ a {\em $\Sigma-$family} (see \cite[Definition 4.8]{Perling}). In \cite[Theorem 4.9]{Perling} it is proved that $\Sigma-$families characterize equivariant sheaves on $X$ or equivalently, $B-$saturated $R-$modules. When $\cE$ is torsion-free, we have the following result.
\begin{proposition}\label{Proposition:Torsionfrees}
Let $\cE$ be an equivariant torsion-free sheaf of rank $s$ and $\{\hat{E}^{\sigma}\}$ its associated $\Sigma-$family. The following holds:
\begin{itemize}
\item[(i)] For any $m'\leq_{\sigma} m$, the linear map $\chi_{m',m}^{\sigma}:E^{\sigma}_{m'}\rightarrow E^{\sigma}_{m}$ is injective.
\item[(ii)] For any character $m\in M$, and any cones $\tau\prec\sigma$ in $\Sigma$, the linear map $i^{\sigma\tau}_{m}:E^{\sigma}_{m}\rightarrow E^{\tau}_{m}$ is injective.
\item[(iii)] There is a vector space $\bE\cong\CC^{s}$ such that $E^{\{0\}}_{m}\cong \bE$ for any $m\in M$. 
\end{itemize}
We have the following commutative diagram:
\vspace{-1mm}
\begin{center}
\begin{tikzpicture}
\matrix (M) [matrix of nodes, row sep=0.2cm, column sep=1.8cm, align=center,text width=0.8cm, text height=0.3cm, anchor=center]{
              & $E^{\{0\}}_{m'}$ & $E^{\sigma}_{m'}$ \\
$\bE$ &                &                 \\
              & $E^{\{0\}}_{m}$  & $E^{\sigma}_{m}.$  \\
};
\draw[->] (M-1-3) -- (M-3-3) node[midway,right]{\scriptsize{$\chi^{\sigma}_{m',m}$}};
\draw[->] (M-1-2) -- (M-3-2) 
	node[midway,right]{\scriptsize{$\chi^{\{0\}}_{m',m}$}} 
	node[midway,left]{\large{$\cong\quad\;$}};
\draw[->] (M-1-3) -- (M-1-2);
\draw[->] (M-3-3) -- (M-3-2);
\draw[->] (M-3-3) -- (M-3-2);
\draw[->] (M-3-3) -- (M-3-2);
\draw[->] (M-1-2) -- (M-2-1)
	node[midway, above]{\scriptsize{$\varphi_{m'}$}};
\draw[->] (M-3-2) -- (M-2-1)
	node[midway, below]{\scriptsize{$\varphi_{m}$}};
\end{tikzpicture}
\end{center}
Moreover, for any character $m\in M$, we have
\[
H^{0}(X,\cE)_{m}=\bigcap_{\sigma\in\Sigma_{\max}}E^{\sigma}_{m}.
\]
\end{proposition}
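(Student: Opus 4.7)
The plan is to handle (i), (ii), (iii) in order, exploiting the explicit algebraic descriptions of the structure maps given above: torsion-freeness of $\cE$ translates directly to torsion-freeness of $\Gamma(U_\sigma,\cE)$ as a $\CC[S_\sigma]$-module for every $\sigma\in\Sigma$. For (i), the map $\chi^\sigma_{m',m}$ is by construction the degree-$m$ component of multiplication by the nonzero element $\chi^{m-m'}\in\CC[S_\sigma]$ on $\Gamma(U_\sigma,\cE)$; since this module is torsion-free over the domain $\CC[S_\sigma]$, such multiplication is injective on each graded piece. For (ii), the relation $\tau\prec\sigma$ yields $U_\tau=(U_\sigma)_{\chi^{m_0}}$ for some $m_0\in M$, and $i^{\sigma\tau}$ is the corresponding localization $\Gamma(U_\sigma,\cE)\to\Gamma(U_\sigma,\cE)_{\chi^{m_0}}=\Gamma(U_\tau,\cE)$, which is injective on any torsion-free module and thus on each $E^\sigma_m$.

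The essential content lies in (iii), where equivariance really matters. Taking $\sigma=\{0\}$, one has $U_{\{0\}}=\TT_N=\Spec\CC[M]$ and every character is a unit in $\CC[M]$; consequently each $\chi^{\{0\}}_{m',m}$ is an isomorphism (its inverse being $\chi^{\{0\}}_{m,m'}$), so all the pieces $E^{\{0\}}_m$ are canonically identified with a single $\CC$-vector space $\bE$, and the resulting identifications provide the maps $\varphi_m$. Transport of structure then yields $\Gamma(\TT_N,\cE)\cong\CC[M]\otimes_\CC\bE$ as $\CC[M]$-modules; since the left-hand side is torsion-free of generic rank $s$ (the rank of $\cE$), we read off $\dim_\CC\bE=s$. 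Commutativity of the outer square in the diagram is the compatibility of localization $i^{\sigma\{0\}}$ with multiplication by characters (both are derived from the same $\CC[S_\sigma]$-module structure), and the two triangles commute by the very choice of the $\varphi_m$.

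Finally, the formula $H^{0}(X,\cE)_m=\bigcap_{\sigma\in\Sigma_{\max}}E^\sigma_m$ follows from the sheaf axiom applied to the affine cover $\{U_\sigma\}_{\sigma\in\Sigma_{\max}}$: a global $m$-graded section is a tuple $(s_\sigma)\in\prod_\sigma E^\sigma_m$ agreeing on overlaps, and by (ii) each $E^\sigma_m$ embeds into the common ambient space $E^{\{0\}}_m\cong\bE$ via $i^{\sigma\{0\}}_m$, making the compatibility condition equivalent to the $s_\sigma$ all corresponding to a single element of $\bE$. The step I expect to be most delicate is the dimension identification in (iii), namely matching the pointwise $\dim_\CC\bE$ with the generic rank $s$ of $\cE$: one must pass from the free presentation $\CC[M]\otimes_\CC\bE$ to the generic fibre via the fraction field of $\CC[M]$, and this is where the torsion-freeness and rank hypotheses are used simultaneously.
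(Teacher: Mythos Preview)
Your argument is correct. The paper itself does not actually prove this proposition: its entire proof reads ``See \cite[Section~4.4]{Perling} and \cite[Section 1.2 and 1.3]{Kly91}.'' What you have written is a clean, self-contained version of the argument one finds in those references, following the same line: torsion-freeness of $\Gamma(U_\sigma,\cE)$ over the integral domain $\CC[S_\sigma]$ gives injectivity of multiplication by characters (part (i)) and of localization (part (ii)); over the open torus every character becomes a unit, forcing $\Gamma(\TT_N,\cE)\cong\CC[M]\otimes_\CC\bE$ to be free, whence $\dim_\CC\bE$ equals the generic rank $s$ (part (iii)); and the $H^0$ formula is the sheaf gluing condition read through the common embedding into $\bE$.

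One small point worth making explicit in the last step: to pass from ``compatible on pairwise overlaps $U_{\sigma\cap\sigma'}$'' to ``equal as elements of $\bE$'' you are implicitly using that $i^{\sigma\{0\}}_m$ factors through $i^{\sigma(\sigma\cap\sigma')}_m$ and that $i^{(\sigma\cap\sigma')\{0\}}_m$ is injective by (ii), so agreement in $\bE$ is \emph{equivalent} to agreement in $E^{\sigma\cap\sigma'}_m$. This is routine, but it is the place where the transitivity of the restriction maps $i^{\sigma\tau}$ enters. Otherwise there is nothing to add.
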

\begin{proof}
See \cite[Section 4.4]{Perling} and \cite[Section 1.2 and 1.3]{Kly91}.
\end{proof}

\begin{remark}
(i) By Proposition \ref{Proposition:Torsionfrees}, the $\Sigma-$family $\{\hat{E}^{\sigma}\}_{\sigma\in\Sigma}$ of a torsion-free sheaf $\cE$ of rank $s$ can be seen as a filtered collection of linear subspaces of a fixed ambient vector space $\bE$. Geometrically, the vector space $\bE$ can be identified with the $s-$dimensional vector space $\Gamma(\TT_{N},\cE)_{m}$ for any character $m\in M$.

(ii) The description of equivariant torsion-free sheaves given above is based on \cite[Section 4]{Perling}. We note that our order of filtrations is reverse of that of Klyachko \cite{Kly89,Kly91}. In these references, the filtration is taken as a collection of linear subspaces of $\cE(x_{0})$, the fiber of $\cE$ at a point in the open orbit $U_{\{0\}}=\TT_{N}\subset X$ (see \cite[Remark 4.25]{Perling}).
\end{remark}

\begin{definition}\label{Def:KlyFilt}
Let $\cE$ be an equivariant torsion-free sheaf, the filtered collection of vector spaces $\{E^{\sigma}_{m}\!\mid\! m\!\in\! M\}_{\sigma\in\Sigma}$ given by its $\Sigma-$family is called the collection of {\em Klyachko filtrations} of $\cE$.
\end{definition} 

In this note we focus on monomial ideals $I$ in the $\Cl(X)-$graded Cox ring $R$. Since monomial ideals are naturally $\ZZ^{r}-$graded they correspond to torsion-free equivariant sheaves of rank $1$.
Therefore, Proposition \ref{Proposition:Torsionfrees} shows that the $\Sigma-$family of a monomial ideal $I$ is structured as a system of vector space filtrations of a $1-$dimensional vector space $\bI\cong\CC$, which can be identified with $I^{\{0\}}_{m}$ for any character $m\in M$.

\begin{remark}\label{Remark:Klydec of monomial ideals}
Let $\{I^{\sigma}_{m}\!\mid\! m\!\in\! M\}_{\sigma\in\Sigma}$ be the collection of Klyachko filtrations of a monomial ideal $I$. Let $m_{0}\in M$ be a character and identify $\bI$ with $I^{\{0\}}_{m_{0}}$.
For each $\sigma\in\Sigma$ and $m\in M$, the linear subspace $I^{\sigma}_{m}\subset \bI$ can be either $I^{\sigma}_{m}\cong \bI$ or $I^{\sigma}_{m}=0$. Therefore, the collection of Klyachko filtrations of a monomial ideal is characterized by attaching to each cone $\sigma\in\Sigma$, the set of characters $\{m\in M\mid I^{\sigma}_{m}\neq0\}$.
\end{remark}

We finish this preliminary section with an example which illustrates Proposition \ref{Proposition:Torsionfrees}, and shows how to compute the collection of Klyachko filtrations of a monomial ideal.

\begin{example}\label{Example:monomial ideal}
Let $R=\CC[x_{0},x_{1},x_2]$ be the Cox ring of $\PP^{2}$ with fan $\Sigma$ as in Example \ref{Example:Projective space and Hirzebruch}(i). Consider the monomial ideal $I=(x_{2}^2,x_{0}x_{2},x_{0}x_{1})$, we will compute the $\Sigma-$family associated to $I$. We present $I$ as follows:
\begin{equation}\label{Eq:Example monomial ideal}
R(0,0,-2)\oplus R(-1,0,-1)\oplus R(-1,-1,0)\xrightarrow{(x_{2}^2\;x_{0}x_{2}\;x_{0}x_{1})}I\rightarrow0.
\end{equation}
Next, we localize at $x^{\widehat{\{0\}}}=x_{0}x_{1}x_{2}$ and we set $R^{\{0\}}:=R_{x^{\widehat{\{0\}}}}$ the localized ring. For any multidegree $(\alpha_{0},\alpha_{1},\alpha_{2})\in\ZZ^{3}$, $R^{\{0\}}_{(\alpha_{0},\alpha_{1},\alpha_{2})}=\CC\langle x_{0}^{\alpha_{0}}x_{1}^{\alpha_{1}}x_{2}^{\alpha_{2}}\rangle$, the vector space spanned by the monomial $x_{0}^{\alpha_{0}}x_{1}^{\alpha_{1}}x_{2}^{\alpha_{2}}$. On the other hand, any character $m=(d_{1},d_{2})$ is embedded as $m=(-d_{1}-d_{2},d_{1},d_{2})$ in $\ZZ^{3}$ via the exact sequence (\ref{Eq:Class group exact sequence}). To compute $I^{\{0\}}_{m}$ we take the degree $m$ component of (\ref{Eq:Example monomial ideal}). This yields the following exact sequence of vector spaces
\[
R^{\{0\}}(0,0,-2)_{m}\oplus R^{\{0\}}(-1,0,-1)_{m}\oplus R^{\{0\}}(-1,-1,0)_{m}\!\xrightarrow{(x_{2}^2\;x_{0}x_{2}\;x_{0}x_{1})}\!I^{\{0\}}_{m}\!\rightarrow\!0.
\]
Thus, $I^{\{0\}}_{m}=\CC\langle x_{0}^{-d_{1}-d_{2}}x_{1}^{d_{1}}x_{2}^{d_{2}}\rangle$ and there are isomorphisms $\phi^{\{0\}}_{m}:I^{\{0\}}_{m}\cong \bI$. 
Let us now fix the ray $\rho_{0}\in\Sigma(1)$ and compute $I^{\rho_{0}}_{m}$ for any character $m=(d_{1},d_{2})\in\ZZ^{2}$. As before, we set $R^{\rho_{0}}:=R_{x^{\widehat{\rho_{0}}}}$ the localization at $x^{\widehat{\rho_{0}}}=x_{1}x_{2}$. Now, for any multidegree $(\alpha_{0},\alpha_{1},\alpha_{2})\in\ZZ^{3}$, 
\[
R^{\rho_{0}}_{(\alpha_{0},\alpha_{1},\alpha_{2})}=
\left\{
\begin{array}{ll}
\CC\langle x_{0}^{\alpha_{0}}x_{1}^{\alpha_{1}}x_{2}^{\alpha_{2}}\rangle,&\text{if}\quad \alpha_{0}\geq0\\
0,&\text{if}\quad \alpha_{0}\leq-1
\end{array}
\right.
\]
and restricting the exact sequence (\ref{Eq:Example monomial ideal}) to degree $m=(d_{1},d_{2})$, we have
\[
I^{\rho_{0}}_{m}=
\left\{
\begin{array}{ll}
\CC\langle x_{0}^{-d_{1}-d_{2}}x_{1}^{d_{1}}x_{2}^{d_{2}}\rangle\cong \bI,&\text{if}\quad -d_{1}-d_{2}\geq0\\
0,&\text{if}\quad -d_{1}-d_{2}\leq-1.
\end{array}
\right.
\]
Similarly, we obtain
\[
I^{\rho_{1}}_{m}\cong
\left\{
\begin{array}{ll}
\bI,&\text{if}\quad d_{1}\geq0\\
0,&\text{if}\quad d_{1}\leq-1
\end{array}
\right.
\quad
I^{\rho_{2}}_{m}\cong
\left\{
\begin{array}{ll}
\bI,&\text{if}\quad d_{2}\geq0\\
0,&\text{if}\quad d_{2}\leq-1.
\end{array}
\right.
\]
It only remains to compute the components in the $\Sigma-$family associated to the two dimensional cones in $\Sigma$. Let us consider $\sigma_{0}\in\Sigma(2)$ with rays $\sigma_{0}(1)=\{\rho_{1},\rho_{2}\}$. We set $R^{\sigma_{0}}:=R_{x^{\widehat{\sigma_{0}}}}$ the localization at $x^{\widehat{\sigma_{0}}}=x_{0}$ and for any multidegree $(\alpha_{0},\alpha_{1},\alpha_{2})\in\ZZ^{3}$, 
\[
R^{\sigma_{0}}_{(\alpha_{0},\alpha_{1},\alpha_{2})}=
\left\{
\begin{array}{ll}
\CC\langle x_{0}^{\alpha_{0}}x_{1}^{\alpha_{1}}x_{2}^{\alpha_{2}}\rangle,&\text{if}\quad \alpha_{1}\geq0,\;\alpha_{2}\geq0\\
0,&\text{if}\quad \alpha_{1}\leq-1\;\text{or}\;\alpha_{2}\leq -1.
\end{array}
\right.
\]
As before, taking the component of degree $m=(d_{1},d_{2})$ of (\ref{Eq:Example monomial ideal}) we obtain
\[
I^{\sigma_{0}}_{m}\cong
\left\{
\begin{array}{llll}
\bI,&\text{if}& d_{1}=0\;\text{and}\;d_{2}\geq1,\;\text{or}\;\\
&&d_{1}\geq1\;\text{and}\;d_{2}\geq0 \\
0,&& \text{otherwise}.
\end{array}
\right.
\]
Similarly, we obtain the remaining components of the $\Sigma-$family:
\[
I^{\sigma_{1}}_{m}\cong
\left\{
\begin{array}{lll}
\bI,&\text{if}& -d_{1}-d_{2}=0\;\text{and}\;d_{2}\geq2,\;\text{or}\;\\
&&-d_{1}-d_{2}\geq1\;\text{and}\;d_{2}\geq0 \\
0,&&\text{otherwise}
\end{array}
\right.
\]
\[
I^{\sigma_{2}}_{m}\cong
\left\{
\begin{array}{ll}
\bI,&\text{if}\quad -d_{1}-d_{2}\geq0\;\text{and}\;d_{1}\geq0\\
0,& \text{otherwise}.
\end{array}
\right.
\]
\end{example}

\section{Klyachko diagrams of monomial ideals}\label{Section:Klyachko diagrams}

In this section, we focus our attention on monomial ideals $I$ in the Cox ring $R$ of a smooth complete toric variety $X$. Using the theory of Klyachko filtrations, we define the {\em Klyachko diagram} of $I$, and we show how it is determined combinatorially by the monomials generating $I$. Conversely, we give a method to compute a minimal set of generators of a $B-$saturated monomial ideal $I$ from its Klyachko diagram. Finally, we compute the first local cohomology module $H_{B}^1(I)$ for any monomial ideal $I$ using its Klyachko diagram.

From now on, we fix a smooth complete toric variety $X$ with fan $\Sigma$. We set  $r = |\Sigma(1)|$, we denote by $R=\CC[x_{1},\dotsc,x_{r}]$ its associated $\Cl(X)-$graded Cox ring and by $B$ its irrelevant ideal.

\subsection{From a monomial ideal to a Klyachko diagram}\label{Section:From ideal to Klyachko diagram}

Let $I=(m_{1},\dotsc,m_{t})$ be a monomial ideal. We write the monomials 
\[
m_{i}=x_{1}^{k_{1}^{i}}\dotsb x_{r}^{k_{r}^{i}}=:\ux^{\uk^{i}},\quad\text{for}\quad\uk^{i}:=(k_{1}^{i},\dotsc,k_{r}^{i})\in\ZZ^{r}_{\geq0},\quad\text{and}\quad 1\leq i \leq t.
\]
and we present $I$ as the image of a $\ZZ^{r}-$graded map as follows:
\begin{equation}\label{Eq:Presentation monomial ideal}
\bigoplus_{i=1}^{t} R(-\uk^{i})\xrightarrow{(m_{1},\dotsc,m_{t})} I\longrightarrow 0.
\end{equation}
Thus, for any character $m=(d_{1},\dotsc,d_{n})$, $I^{\{0\}}_{m}\cong \CC\langle x_{1}^{\langle m, n(\rho_{1})\rangle}
\dotsb x_{r}^{\langle m, n(\rho_{r})\rangle}\rangle$ and there are isomorphisms $\phi^{\{0\}}_{m}:I^{\{0\}}_{m}\cong\bI$. 
Our first objective is to describe the subspaces $I^{\sigma}_{m} \subset \bI$ for any character $m = (d_1,\hdots,d_n)$ and any cone $\sigma \in \Sigma$. As observed in Remark \ref{Remark:Klydec of monomial ideals}, we want to characterize the sets of characters $\{m\in M\mid I^{\sigma}_{m}\neq0$\} for each cone $\sigma\in\Sigma$. Each of this sets can be seen as the staricase diagram for the inclusion $I^{\sigma}\subset R^{\sigma}$ as $\CC[S_{\sigma}]-$modules.

\begin{lemma}\label{Lemma:KlyFilt for monomial}
Let $I=(\ux^{\uk})\subset R$, with $\uk\in\ZZ^{r}_{\geq0}$, be an ideal generated by a single monomial. Then, for any cone $\sigma=\cone(\rho_{i_{1}},\dotsc,\rho_{i_{c}})\in\Sigma$,
\[
I^{\sigma}_{m}\cong
\left\{
\begin{array}{lll}
\bI,&\text{if} &m\in \cC^{\sigma}_{\uk}\\
0,&&\text{otherwise},
\end{array}
\right.
\]
where $\cC_{\uk}^{\sigma}:=\{m\in M\mid\langle m, \rho_{i_{j}}\rangle \geq k_{i_{j}},\ \text{for}\ 1\leq j\leq c\}$.
\end{lemma}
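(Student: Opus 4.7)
The plan is to follow the computation template of Example~\ref{Example:monomial ideal}, specialized to a principal ideal. Because $R$ is an integral domain and $\ux^{\uk}\neq 0$, multiplication by $\ux^{\uk}$ is injective, so the presentation (\ref{Eq:Presentation monomial ideal}) reduces to an isomorphism of $\ZZ^{r}$-graded $R$-modules $R(-\uk)\cong I$. It therefore suffices to compute $R(-\uk)^{\sigma}_{m}$ and transfer the answer to $I^{\sigma}_{m}$ via this isomorphism.

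By the definition of the $\Sigma$-family in (\ref{Eq:Sigma family decomposition}), $I^{\sigma}_{m}$ equals the $\ZZ^{r}$-graded piece of $I_{x^{\hat{\sigma}}}$ of multidegree $\phi(m)=(\langle m, n(\rho_{1})\rangle,\dotsc,\langle m, n(\rho_{r})\rangle)$. Unwinding the shift convention $R(-\uk)_{\mathbf{b}}=R_{\mathbf{b}-\uk}$, this identifies $I^{\sigma}_{m}$ with $(R_{x^{\hat{\sigma}}})_{\phi(m)-\uk}$; on the level of monomials, an element there is represented by the would-be monomial $\ux^{\phi(m)-\uk}$, whose image under multiplication by $\ux^{\uk}$ is $\ux^{\phi(m)}\in I$.

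Next I would analyze when that graded piece is nonzero. The Laurent polynomial ring $R_{x^{\hat{\sigma}}}$ is obtained from $R$ by inverting exactly the variables $x_{i}$ with $\rho_{i}\notin\sigma(1)$, so its $\ZZ^{r}$-graded piece in multidegree $\mathbf{b}=(b_{1},\dotsc,b_{r})$ is the one-dimensional space $\CC\langle\ux^{\mathbf{b}}\rangle$ when $b_{i}\geq 0$ for every $i$ with $\rho_{i}\in\sigma(1)$, and vanishes otherwise. Specialising to $\mathbf{b}=\phi(m)-\uk$, the condition becomes $\langle m, n(\rho_{i_{j}})\rangle\geq k_{i_{j}}$ for $1\leq j\leq c$, which is precisely $m\in\cC^{\sigma}_{\uk}$. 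Finally, since $I$ is a torsion-free rank one module, Proposition~\ref{Proposition:Torsionfrees}(iii) together with Remark~\ref{Remark:Klydec of monomial ideals} identifies any nonzero $I^{\sigma}_{m}$ with the ambient line $\bI$, yielding the claim.

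There is no substantial obstacle: the argument is a direct unwinding of definitions mirroring Example~\ref{Example:monomial ideal}. The only delicate point is keeping track of the $\ZZ^{r}$-graded shifts, so that the generator of $R(-\uk)$ sits in multidegree $\uk$ and the image under multiplication by $\ux^{\uk}$ is correctly landing in $I$; one can sanity-check this by recovering the formulas of Example~\ref{Example:monomial ideal} as the special cases $\uk=(0,0,2),(1,0,1),(1,1,0)$ in $\PP^{2}$.
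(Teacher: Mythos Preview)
Your argument is correct and is essentially the same as the paper's own proof: both specialize the presentation~(\ref{Eq:Presentation monomial ideal}) to $t=1$ and compute $R^{\sigma}_{m}(-\uk)=(R_{x^{\hat{\sigma}}})_{\phi(m)-\uk}$, observing it is the line $\CC\langle\ux^{\phi(m)-\uk}\rangle$ exactly when $\langle m,\rho_{i_{j}}\rangle\geq k_{i_{j}}$ for all $j$. The paper simply records this computation in one displayed formula, while you unpack the shift conventions and the localization step in words.
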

\begin{proof}
The lemma follows from (\ref{Eq:Presentation monomial ideal}), when $t=1$ and using that
\[
R^{\sigma}_{m}(-\uk)=
\left\{
\begin{array}{@{}l@{\ }l@{\ }l@{}}
\CC\langle x_{1}^{\langle m,\rho_{1}\rangle - k_{1}}\!\!\dotsb x_{r}^{\langle m,\rho_{r}\rangle - k_{r}}\rangle, &\text{if}
&\langle m, \rho_{i_{j}}\rangle - k_{i_{j}}\geq 0,\ \text{for}\ 1\leq j\leq c\\
0, &&\text{otherwise}.
\end{array}
\right.
\]
\end{proof}
We set $\cC_{0}^{\sigma}:=\cC^{\sigma}_{(0,\dotsc,0)}$, and notice the inclusion $\cC^{\sigma}_{\uk}\subset \cC^{\sigma}_{0}$ corresponding to $(\ux^{\uk})^{\sigma}\subset R^{\sigma}$ for any $\uk\in\ZZ^{r}$. Applying Lemma \ref{Lemma:KlyFilt for monomial}, repeatedly, we have:

\begin{proposition}\label{Prop:KlyFilt for monomomial ideal}
Let $I = (m_1,\hdots, m_{t}) \subset R$ be a monomial ideal with $m_{i}=\ux^{\uk^{i}}$ for $\uk^{i}\in\ZZ^{r}_{\geq0}$ and $1\leq i \leq t$. Then, for any cone $\sigma\in\Sigma$,
\[
I^{\sigma}_{m}\cong
\left\{
\begin{array}{@{}lll}
\bI, &\text{if} &m\in\bigcup_{i=1}^{t}\cC^{\sigma}_{\uk^{i}}\\
0,& &\text{otherwise}.
\end{array}
\right.
\]
\end{proposition}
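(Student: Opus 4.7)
The proof will be a short reduction of the general case to the single-generator case treated in Lemma~\ref{Lemma:KlyFilt for monomial}, using that localization, passing to a homogeneous piece, and taking images all commute with finite sums. The key observation is that $I=\sum_{i=1}^{t}(\ux^{\uk^{i}})$, so that the surjection~(\ref{Eq:Presentation monomial ideal}) exhibits $I^{\sigma}_{m}$ as the sum of the images of the one-dimensional spaces $R(-\uk^{i})^{\sigma}_{m}$ inside the one-dimensional space $\bI$.

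First I would localize~(\ref{Eq:Presentation monomial ideal}) at $x^{\hat{\sigma}}$; since localization is exact, this yields the surjection
\[
\bigoplus_{i=1}^{t} R^{\sigma}(-\uk^{i})\twoheadrightarrow I^{\sigma}\longrightarrow 0.
\]
Taking the degree-$m$ component (which is an exact functor on $\ZZ^{r}$-graded modules) gives
\[
\bigoplus_{i=1}^{t} R^{\sigma}(-\uk^{i})_{m}\twoheadrightarrow I^{\sigma}_{m}\longrightarrow 0,
\]
so $I^{\sigma}_{m}=\sum_{i=1}^{t}\im\bigl(R^{\sigma}(-\uk^{i})_{m}\to I^{\sigma}_{m}\bigr)$.

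Next I would apply Lemma~\ref{Lemma:KlyFilt for monomial} to each summand: $R^{\sigma}(-\uk^{i})_{m}$ is one-dimensional (generated by $\ux^{m}/\ux^{\uk^{i}}$, viewed as a monomial in $R^{\sigma}$) precisely when $m\in\cC^{\sigma}_{\uk^{i}}$, and is zero otherwise. Composing with the canonical injection $I^{\sigma}_{m}\hookrightarrow I^{\{0\}}_{m}\cong\bI$ from Proposition~\ref{Proposition:Torsionfrees}(ii), each nonzero summand maps onto the entire one-dimensional space $\bI$ (the image equals the $\CC$-span of the same monomial in $R^{\{0\}}_{m}$). Therefore $I^{\sigma}_{m}$, as a subspace of the one-dimensional $\bI$, is either $\bI$ (if at least one index $i$ satisfies $m\in\cC^{\sigma}_{\uk^{i}}$) or $0$ (otherwise), which is exactly the claim.

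There is no real obstacle; the only bookkeeping point is to ensure the images of the different $R^{\sigma}(-\uk^{i})_{m}$ are being summed \emph{inside the same} ambient line $\bI$, which is guaranteed by passing through the fixed identification $\phi^{\{0\}}_{m}\colon I^{\{0\}}_{m}\xrightarrow{\cong}\bI$ and using the commutative diagram in Proposition~\ref{Proposition:Torsionfrees}. This makes explicit that $\cC^{\sigma}_{\uk^{i}}$ is nothing other than the staircase describing the inclusion $(\ux^{\uk^{i}})^{\sigma}\subset R^{\sigma}$ of $\CC[S_{\sigma}]$-modules, and the union $\bigcup_{i}\cC^{\sigma}_{\uk^{i}}$ is the staircase for the sum $I^{\sigma}=\sum_{i}(\ux^{\uk^{i}})^{\sigma}$.
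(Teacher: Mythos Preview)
Your proof is correct and follows essentially the same approach as the paper: both use the presentation~(\ref{Eq:Presentation monomial ideal}), pass to the localized degree-$m$ piece, and invoke Lemma~\ref{Lemma:KlyFilt for monomial} to see that $I^{\sigma}_{m}=0$ exactly when every $R^{\sigma}_{m}(-\uk^{i})$ vanishes, i.e.\ when $m\notin\bigcup_{i}\cC^{\sigma}_{\uk^{i}}$. Your version is more explicit about the exactness of localization and graded components and about the ambient identification with $\bI$ via Proposition~\ref{Proposition:Torsionfrees}, but the argument is the same.
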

\begin{proof}
It follows from (\ref{Eq:Presentation monomial ideal}) that $I^{\sigma}_{m}=0$ if and only if $R^{\sigma}_{m}(-\uk^{i})= 0$ for all $1\leq i \leq t$. By Lemma \ref{Lemma:KlyFilt for monomial} this occurs if and only if $m\in M\setminus\bigcup_{i=1}^{t}\cC^{\sigma}_{\uk}$, and the result follows.
\end{proof}

Notice that Proposition \ref{Prop:KlyFilt for monomomial ideal} already gives a description of the collection of Klyachko filtrations of a monomial ideal. However, the information on the inclusion $I^{\sigma}\subset R^{\sigma}$ is encoded in 
\[\cC^{0}\setminus\bigcup_{i=1}^{t}\cC^{\sigma}_{\uk^{i}}=\bigcap_{i=1}^{t}(\cC^{\sigma}_{0}\setminus \cC^{\sigma}_{\uk^{i}})=\bigcap_{i=1}^{t}\bigcup_{j=1}^{c}\{m\in M \mid 0\leq \langle m, \rho_{i_{j}} \rangle < k^{i}_{i_{j}}\},\]
which is the union of $c^{t}$ sets. Indeed, for each $1\leq j_{1},\dotsc,j_{t}\leq c$,
\[
P_{j_{1},\dotsc,j_{t}}:=\{m\in M\mid 0\leq \langle m, \rho_{i_{j_{1}}}\rangle < k^{1}_{j_{1}},\dotsc,0\leq \langle m, \rho_{i_{j_{t}}}\rangle < k^{t}_{j_{t}}\}.
\]
The {\em Klyachko diagram} defined below is used in Proposition \ref{Prop:From ideal to diagram} to give a more compact alternative characterization of the collection of Klyachko filtrations of a monomial ideal. We attach to the monomial ideal $I$ a collection of pairs $\{(\cC^{\sigma}_{I},\Delta^{\sigma}_{I})\}_{\sigma\in\Sigma}$ constructed as follows.

For any ray $\rho_{j}\in\Sigma(1)$, we set $s_{j}=s_{\rho_{j}}:=\min\{\deg_{\rho_{j}}(m_{1}),\dotsc,\deg_{\rho_{j}}(m_{t})\}$. We write $\us:=(s_{1},\dotsc,s_{r})$, and for any $c-$dimensional cone $\sigma\!=\!\cone(\rho_{i_{1}},\dotsc, $ $\rho_{i_{c}})$, we set
\[
\cC^{\sigma}_{I}:=\cC^{\sigma}_{\us}
=\{m\in M \mid \langle m, \rho_{i_{p}}\rangle \geq s_{i_{p}} 
,\;1\leq p\leq c\}=\bigcap_{j=1}^{c}\cC^{\rho_{i_{j}}}_{I}.
\]
Next, we construct $\Delta^{\sigma}_{I}$. First, for any subset of monomials $\cS = \{n_1,\hdots,n_{s}\} \subset \{m_1,\hdots, m_{t}\}$ with $0 \leq s \leq t$, we define $\Delta^{\sigma}_{I}(
\cS)\subset \cC^{\sigma}_{I}
$
 recursively on $s$:
 
 If $s = 0$, then $\cS = \emptyset$ and we set:
\[
\Delta^{\sigma}_{I}(
\emptyset) := \{m \in M \mid s_{i_1}
 \leq \langle m,\rho_{i_1} \rangle, \hdots, s_{i_c}
 \leq \langle m,\rho_{i_c} \rangle\}.
\]

Otherwise, $s\geq 1$ and there is a permutation $\epsilon_{i_c} \in \mathfrak{S}_s$ such that
\[
\deg_{\rho_{i_c}}(n_{\epsilon_{i_c}(1)}) \leq \deg_{\rho_{i_c}}(n_{\epsilon_{i_c}(2)}) \leq \cdots \leq \deg_{\rho_{i_c}}(n_{\epsilon_{i_c}(s)}).
\]

\begin{itemize}
\item If $c = 1$ (and thus $\sigma$ is a ray), then 
\[
\Delta^{\sigma}_{I}(
\cS) := 
\{m \in M \mid s_{i_1}
\leq \langle m,\rho_{i_{1}} \rangle < \deg_{\rho_{i_1}}(n_{\epsilon_{i_1}(1)})
\}.
\]

\item Otherwise, $\Delta^{\sigma}_{I}(
\cS) := \bigcup_{j=0}^{s} \Delta^{\sigma}_{I}(
\cS)_{j}$ where: 

\[\begin{array}{l@{}}
\Delta^{\sigma}_{I}(
\cS)_{0} := \{m \in M \mid s_{i_{c}}
\leq \langle m, \rho_{i_{c}} \rangle < \deg_{\rho_{i_{c}}}(n_{\epsilon_{i_{c}}(1)})
\} \cap \Delta^{\sigma'}_{I}(
\emptyset),\\[0.5cm]

\Delta^{\sigma}_{I}(
\cS)_{j} := \{m \in M \mid \deg_{\rho_{i_{c}}}(n_{\epsilon_{i_{c}}(j)})
\leq \langle m, \rho_{i_{c}} \rangle < \deg_{\rho_{i_{c}}}(n_{\epsilon_{i_{c}}(j+1)}))
\}\; \cap\\[0.3cm]
\hfill \!\!\Delta^{\sigma'}_{I}(
\{n_{\epsilon_{i_{c}}(1)},\hdots, n_{\epsilon_{i_{c}}(j)}\}), \; 1 \leq j \leq s-1,\\[0.5cm]

\Delta^{\sigma}_{I}(
\cS)_{s} := \{m \in M \mid \deg_{\rho_{i_{c}}}(n_{\epsilon_{i_{c}}(s)}) 
\leq \langle m, \rho_{i_{c}} \rangle\} \!\cap\! \Delta^{\sigma'}_{I}(
\{n_{\epsilon_{i_{c}}(1)},\hdots, n_{\epsilon_{i_{c}}(s)}\}),\\[0.5cm]

\text{with}\;\; \sigma' = \cone(\rho_{i_1},\hdots, \rho_{i_{c-1}}).
\end{array}\]

\end{itemize}

\noindent Finally, we define $\Delta^{\sigma}_{I}
:= \Delta^{\sigma}_{I}(
\{m_{1},\hdots, m_{t}\})$.

\begin{definition}\label{Def:Klyachko diagram}
We call the collection of pairs $\{(\cC^{\sigma}_{I},\Delta^{\sigma}_{I})\}_{\sigma\in\Sigma}$ the {\em Klyachko diagram} of $I$. 
\end{definition}

Observe that each of the pairs $(\cC^{\sigma}_{I},\Delta^{\sigma}_{I})$ depicts a staircase diagram of the inclusion $I^{\sigma}\subset R^{\sigma}$ (see also Example \ref{Example:Klyachko diagram algorithm} below). Precisely, we have the following proposition showing that the Klyachko diagram characterizes the $\Sigma-$family of $I$.

\begin{proposition} \label{Prop:From ideal to diagram}
Let $I = (m_1,\hdots, m_{t}) \subset R$ be a monomial ideal with $m_{i}=\ux^{\uk^{i}}$ for $\uk^{i}\in\ZZ^{r}_{\geq0}$ and $1\leq i \leq t$. Let $\{(\cC^{\sigma}_{I},\Delta^{\sigma}_{I})\}_{\sigma\in\Sigma}$ be the Klyachko diagram of $I$. Then, for any 
cone $\sigma \in \Sigma$, 
\[
I^{\sigma}_{m}
\cong
\left\{
\begin{array}{@{}ll}
\bI, &m\in\cC^{\sigma}_{I}
\setminus\Delta^{\sigma}_{I}
\\
0  , &\text{otherwise.}
\end{array}
\right.
\]
In particular it holds
\begin{equation}\label{Eq:Identity KlyFilt2Dec}
\bigcup_{i=1}^{t}\cC^{\sigma}_{\uk^{i}}=\cC^{\sigma}_{I}\setminus\Delta^{\sigma}_{I}\quad \text{and}\quad \cC^{\sigma}_{0}\setminus \bigcup_{i=1}^{t}\cC^{\sigma}_{\uk^{i}}=\Delta^{\sigma}_{I}\cup(\cC^{\sigma}_{0}\setminus\cC^{\sigma}_{I}).
\end{equation}
\end{proposition}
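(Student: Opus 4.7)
The plan is to first invoke Proposition~\ref{Prop:KlyFilt for monomomial ideal}, which already identifies $I^{\sigma}_m \cong \bI$ with the membership $m \in \bigcup_{i=1}^{t} \cC^{\sigma}_{\uk^i}$, so the proposition reduces to the first set-theoretic identity in \eqref{Eq:Identity KlyFilt2Dec}. Since $s_{i_p} = \min_{i} k^{i}_{i_p}$, each $\cC^{\sigma}_{\uk^i}$ lies inside $\cC^{\sigma}_{\us} = \cC^{\sigma}_I$, and the recursive construction of $\Delta^{\sigma}_I$ immediately yields $\Delta^{\sigma}_I \subseteq \cC^{\sigma}_I \subseteq \cC^{\sigma}_0$. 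The second identity in \eqref{Eq:Identity KlyFilt2Dec} therefore follows from the first by taking complements inside $\cC^{\sigma}_0$, and it only remains to prove $\bigcup_{i=1}^{t} \cC^{\sigma}_{\uk^i} = \cC^{\sigma}_I \setminus \Delta^{\sigma}_I$.

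I would prove the following refinement by induction on $c = \dim \sigma$: for every $J \subseteq \{1,\ldots,t\}$, writing $\cS_J := \{m_j \mid j \in J\}$,
\[
\Delta^{\sigma}_I(\cS_J) = \cC^{\sigma}_I \setminus \bigcup_{j \in J} \cC^{\sigma}_{\uk^j};
\]
taking $J = \{1, \ldots, t\}$ then recovers the required identity. The base case $c = 1$ is immediate from the definitions: for $\sigma = \rho_{i_1}$ one has $\bigcup_{j \in J} \cC^{\rho_{i_1}}_{\uk^j} = \{m \mid \langle m, \rho_{i_1}\rangle \geq \min_{j \in J} k^{j}_{i_1}\}$, whose complement inside $\cC^{\rho_{i_1}}_I = \{m \mid \langle m, \rho_{i_1}\rangle \geq s_{i_1}\}$ is precisely $\Delta^{\rho_{i_1}}_I(\cS_J)$, while the empty case holds since $\Delta^{\sigma}_I(\emptyset) = \cC^{\sigma}_I$ by definition.

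For the inductive step, write $\sigma = \cone(\rho_{i_1}, \ldots, \rho_{i_c})$ and $\sigma' = \cone(\rho_{i_1}, \ldots, \rho_{i_{c-1}})$. Enumerating $\cS_J = \{n_1, \ldots, n_s\}$ and setting $d_j := \deg_{\rho_{i_c}}(n_{\epsilon_{i_c}(j)})$ with the conventions $d_0 := s_{i_c}$ and $d_{s+1} := +\infty$, the fact that each $n_j$ is a generator of $I$ gives $d_1 \geq s_{i_c}$, so the slabs $T_j := \{m \in \cC^{\sigma}_I \mid d_j \leq \langle m, \rho_{i_c}\rangle < d_{j+1}\}$ for $0 \leq j \leq s$ form a disjoint decomposition of $\cC^{\sigma}_I$. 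On $T_j$ the inequality $\langle m, \rho_{i_c}\rangle \geq d_k$ holds iff $k \leq j$, so letting $J_j \subseteq J$ denote the indices corresponding to $\{n_{\epsilon_{i_c}(1)}, \ldots, n_{\epsilon_{i_c}(j)}\}$, one obtains
\[
T_j \cap \bigcup_{i \in J} \cC^{\sigma}_{\uk^i} = T_j \cap \bigcup_{i \in J_j} \cC^{\sigma'}_{\uk^i}.
\]
Applying the induction hypothesis to $\sigma'$ and $J_j$ rewrites the complement of the right-hand side inside $T_j \subseteq \cC^{\sigma'}_I$ as $T_j \cap \Delta^{\sigma'}_I(\cS_{J_j})$, which matches the piece $\Delta^{\sigma}_I(\cS_J)_j$ of the recursive definition; unioning over $0 \leq j \leq s$ closes the induction. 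The main obstacle is purely organizational: one must match the slab decomposition to the recursion term by term, with some care at the endpoint cases $j = 0$ (where $J_0 = \emptyset$ and $\Delta^{\sigma'}_I(\emptyset) = \cC^{\sigma'}_I$) and $j = s$ (where the upper bound on $\langle m, \rho_{i_c}\rangle$ disappears).
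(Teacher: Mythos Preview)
Your proof is correct. The core mechanism---peeling off the last ray $\rho_{i_c}$ and matching the slabs $\{d_j \le \langle m,\rho_{i_c}\rangle < d_{j+1}\}$ to the pieces $\Delta^{\sigma}_I(\cS)_j$ of the recursion---is exactly what the paper does, but the organization differs. The paper works element-wise: it fixes an $m\in\Delta^{\sigma}_I$, locates the piece $\Delta^{\sigma}_I(\{m_1,\dotsc,m_t\})_j$ containing it, observes that this kills $R^{\sigma}_m(-\uk^{\epsilon_{i_c}(k)})$ for $k>j$, and then recurses on $\sigma'$ with the smaller set of surviving monomials; the converse direction (that $m\in\cC^{\sigma}_I\setminus\Delta^{\sigma}_I$ forces $I^{\sigma}_m\cong\bI$) is dispatched with a one-word ``Analogously''. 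Only afterwards is \eqref{Eq:Identity KlyFilt2Dec} read off by comparison with Proposition~\ref{Prop:KlyFilt for monomomial ideal}. You reverse the logic: invoke Proposition~\ref{Prop:KlyFilt for monomomial ideal} first to reduce everything to the set identity, then prove the strengthened statement $\Delta^{\sigma}_I(\cS_J)=\cC^{\sigma}_I\setminus\bigcup_{j\in J}\cC^{\sigma}_{\uk^j}$ for \emph{all} subsets $J$ by a formal induction on $\dim\sigma$. This buys you both directions at once and makes the role of the intermediate sets $\Delta^{\sigma'}_I(\cS_{J_j})$ explicit, at the cost of carrying the quantifier over $J$ through the induction.
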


 Before proving this result let us see an example that illustrate Proposition \ref{Prop:From ideal to diagram}, and shows how to compute the Klyachko diagram of a monomial ideal.

\begin{example}\label{Example:Klyachko diagram algorithm}
Let $R=\CC[x_{0},x_{1},x_2]$ be the Cox ring of $\PP^{2}$ with fan $\Sigma$ as in Example \ref{Example:Projective space and Hirzebruch}(i), and let $I=(x_{2}^2,x_{0}x_{2},x_{0}x_{1})$ be the monomial ideal of Example \ref{Example:monomial ideal}. First, we compute $s_{0}=s_{1}=s_{2}=0$ and we have 
\[
\begin{array}{ll}
\cC^{\rho_{0}}_{I}=\{(d_1,d_2)\mid d_1+d_2\leq 0\}&
\cC^{\rho_{1}}_{I}=\{(d_1,d_2)\mid d_1\geq0\}\\
\cC^{\rho_2}_{I}=\{(d_1,d_2)\mid d_2\geq0\}.
\end{array}
\]
We compute $\Delta^{\sigma_{0}}_{I}$. We order the monomials with respect to $\rho_2$: $\deg_{\rho_{2}}(x_{0}x_1)=0<\deg_{\rho_{2}}(x_{0}x_2)=1<\deg_{\rho_{2}}(x_2^2)=2$. We obtain
\[
\begin{array}{l}
\Delta^{\sigma_{0}}_{I}(\{x_{2}^2,x_{0}x_{2},x_{0}x_{1}\})_{0}=\emptyset\\
\Delta^{\sigma_{0}}_{I}(\{x_{2}^2,x_{0}x_{2},x_{0}x_{1}\})_{1}=\{(d_1,d_2)\mid d_2=0\}\cap \Delta^{\rho_1}(\{x_0x_1\})\}\\
\Delta^{\sigma_{0}}_{I}(\{x_{2}^2,x_{0}x_{2},x_{0}x_{1}\})_{2}=\{(d_1,d_2)\mid d_2=1\}\cap \Delta^{\rho_1}(\{x_0x_1,x_0x_2\})\\
\Delta^{\sigma_{0}}_{I}(\{x_{2}^2,x_{0}x_{2},x_{0}x_{1}\})_{3}=\emptyset.
\end{array}
\]
Since $\Delta^{\rho_1}_{I}(\{x_0x_1\})=\{(d_1,d_2)\mid d_1 =0\}$ and $\Delta^{\rho_{1}}_{I}(\{x_0x_1,x_0x_2\})=\emptyset$, we get $\Delta^{\sigma_{0}}_{I}(\{x_{2}^2,x_{0}x_{2},x_{0}x_{1}\})_{1}=\{(0,0)\}$ and $\Delta^{\sigma_{0}}_{I}(\{x_{2}^2,x_{0}x_{2},x_{0}x_{1}\})_{2}=\emptyset$. Therefore, $\Delta^{\sigma_0}_{I}=\{(0,0)\}$. Similarly, for the remaining cones we compute $\Delta^{\sigma_1}_{I}=\{(0,0),(-1,1)\}$ and $\Delta^{\sigma_2}_{I}=\emptyset$. (See figures \ref{Fig:PP2} and \ref{Fig:PP2alt}).

\begin{figure}[h]
\begin{tikzpicture}
    [
    	scale=0.5,
    	x=1cm,
    	y=1cm,
        dot/.style={
        	circle, 
       		fill=black,
       		inner xsep=0, 
       		inner ysep=2},
        dot2/.style={
        	circle,
        	fill=white,
        	inner xsep=0, 
        	inner ysep=1.2},
        dot3/.style={
        	circle,
        	fill=gray,
        	inner xsep=0, 
        	inner ysep=1}
    ]
\foreach \x in {-3,...,3}{
	\foreach \y in {-3,...,3}{
		\node[dot3] at (\x,\y){};
	}
}
\foreach \x in {0,...,3}{
	\foreach \y in {0,...,3}{
        \node[dot] at (\x,\y){ };
    }
}
\node[dot2] at (0,0) {};
\end{tikzpicture}
\hfill
\begin{tikzpicture}
     [
    	scale=0.5,
    	x=1cm,
    	y=1cm,
        dot/.style={
        	circle, 
       		fill=black,
       		inner xsep=0, 
       		inner ysep=2},
        dot2/.style={
        	circle,
        	fill=white,
        	inner xsep=0, 
        	inner ysep=1.2},
        dot3/.style={
        	circle,
        	fill=gray,
        	inner xsep=0, 
        	inner ysep=1}
    ]
\foreach \x in {-3,...,3}{
	\foreach \y in {-3,...,3}{
		\node[dot3] at (\x,\y){};
	}
}
\foreach \y in {0,...,3}{
	\foreach \x in {-3,...,-\y}{
		\node[dot] at (\x,\y){};
	}
}
\node[dot2] at (0,0) {};
\node[dot2] at (-1,1) {};
\end{tikzpicture}
\hfill
\begin{tikzpicture}
    [
    	scale=0.5,
    	x=1cm,
    	y=1cm,
        dot/.style={
        	circle, 
       		fill=black,
       		inner xsep=0, 
       		inner ysep=2},
        dot2/.style={
        	circle,
        	fill=white,
        	inner xsep=0, 
        	inner ysep=1.2},
        dot3/.style={
        	circle,
        	fill=gray,
        	inner xsep=0, 
        	inner ysep=1}
    ]
\foreach \x in {-3,...,3}{
	\foreach \y in {-3,...,3}{
		\node[dot3] at (\x,\y){};
	}
}
\foreach \x in {0,...,3}{
	\foreach \y in {-3,...,-\x}{
		\node[dot] at (\x,\y){};
	}
}
\end{tikzpicture}
\caption{$\circ$ stand for points of $\Delta^{\sigma_{0}}_{I}$ (respectively $\Delta^{\sigma_{1}}_{I}$ and $\Delta^{\sigma_{2}}_{I}$) inside the points $\bullet$ of $\cC^{\sigma_{0}}_{I}$ (respectively $\cC^{\sigma_{1}}_{I}$ and $\cC^{\sigma_{2}}_{I}$).}\label{Fig:PP2}
\end{figure}

\begin{figure}[h]
\begin{tikzpicture}
    [
    	scale=0.7,
    	x=1cm,
    	y=1cm,
        dot/.style={
        	circle, 
       		fill=black,
       		inner xsep=0, 
       		inner ysep=2},
        dot2/.style={
        	circle,
        	fill=white,
        	inner xsep=0, 
        	inner ysep=1.2},
        dot3/.style={
        	circle,
        	fill=gray,
        	inner xsep=0, 
        	inner ysep=1}
    ]
\foreach \x in {-4,...,3}{
	\foreach \y in {-3,...,4}{
		\node[dot3] at (\x,\y){};
	}
}
\path[draw] (0,4) -- (0,1) -- (1,1) -- (1,0) -- (3,0);
\path[fill=gray, fill opacity =0.1] (0,4) -- (0,1) -- (1,1) -- (1,0) -- (3,0) -- (3,4);

\path[draw] (-4,4) -- (-2,2) -- (-3,2) -- (-1,0) -- (-4,0);
\path[fill=gray, fill opacity =0.1] (-4,4) -- (-2,2) -- (-3,2) -- (-1,0) -- (-4,0);

\path[draw] (0,-3) -- (0,0) -- (3,-3);
\path[fill=gray, fill opacity =0.1] (0,-3) -- (0,0) -- (3,-3);

\end{tikzpicture}
\caption{The Klyachko diagram of Figure \ref{Fig:PP2} represented together in a single figure. The shadowed region corresponds to each set $\cC^{\sigma_{i}}_{I}\setminus \Delta^{\sigma_{i}}_{I}$ for $i=0,1,2$.}\label{Fig:PP2alt}
\end{figure}
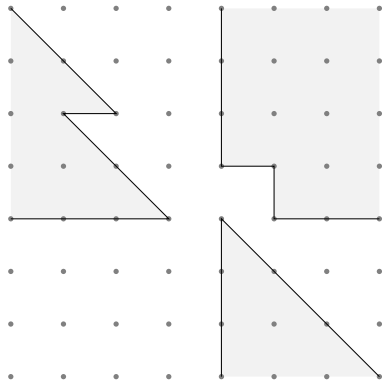

By Proposition \ref{Prop:From ideal to diagram}, we have
\[
\begin{array}{l@{\hspace{2mm}}l@{}}
I^{\rho_0}_{m}\cong
	\left\{
\begin{array}{@{}l@{\,}l@{}}
\bI, &m\!\in\!\{d_1+d_2\leq 0\}\\
0  , &\text{\hspace{4mm}otherwise.}
\end{array}
	\right.
&
I^{\rho_1}_{m}\cong
	\left\{
\begin{array}{@{}l@{\,}l@{}}
\bI, &m\!\in\!\{d_1\geq 0\}\\
0  , &\text{\hspace{4mm}otherwise.}
\end{array}
	\right.\\[3mm]
I^{\rho_2}_{m}\cong
	\left\{
\begin{array}{@{}l@{\,}l@{}}
\bI, &m\!\in\!\{d_2\geq 0\}\\
0  , &\text{\hspace{4mm}otherwise.}
\end{array}
	\right.
&
I^{\sigma_0}_{m}\cong
	\left\{
\begin{array}{@{}l@{\,}l@{}}
\bI, &m\!\in\!
\left\{ 
	\begin{array}{@{}l@{}}
		d_1\geq 0\\
		d_2\geq0
	\end{array}	
	\right\}\!\!\setminus\!\!\{(0,0)\}\\
0  , &\text{\hspace{4mm}otherwise.}
\end{array}
	\right.\\
I^{\sigma_1}_{m}\cong
	\left\{
\begin{array}{@{}l@{\,}l@{}}
\bI, &m\!\in\!
\left\{
	\begin{array}{@{}l@{}}
		d_1+d_2\leq 0\\
		d_2\geq0
	\end{array}	
	\right\}\!\!\setminus\!\!\left\{
		\begin{array}{@{}l@{}}
		(0,0)\\
		(-1,1)
		\end{array}\right\}\\[3mm]
0  , &\text{\hspace{4mm}otherwise.}
\end{array}
	\right.
&
I^{\sigma_2}_{m}\cong
	\left\{
\begin{array}{@{}l@{\,}l@{}}
\bI, &m\!\in\!
\left\{
	\begin{array}{@{}l@{}}
		d_1\geq 0\\
		d_1+d_2\leq0
	\end{array}	
	\right\}\\
0  , &\text{\hspace{4mm}otherwise}
\end{array}
	\right.
\end{array}
\]
which coincides with the $\Sigma-$family computed in Example \ref{Example:monomial ideal}.
\end{example}

\begin{proof}[Proof of Proposition \ref{Prop:From ideal to diagram}]
First, we recall that for $1\leq j\leq t$,
\[
R^{\rho_{j}}_{m}
=
\left\{
\begin{array}{@{}ll}
\CC\langle x_{1}^{\langle m, n(\rho_{1})\rangle
}
\dotsb x_{r}^{\langle m, n(\rho_{r})\rangle 
}\rangle,&\langle m, n(\rho_{j})\rangle\geq
0
\\
0,&\text{otherwise.}
\end{array}
\right.
\]
If $\sigma = \rho_{j}$ is a ray, then we have
\[
I^{\rho_{j}}_{m}
\cong
\left\{
\begin{array}{@{}ll}
\bI, &\langle m, n(\rho_{j})\rangle \geq s_{j}
\\
0  , &\langle m, n(\rho_{j})\rangle <    s_{j}
.
\end{array}
\right.
\]
Otherwise, $\sigma = \cone(\rho_{i_{1}}, \hdots, \rho_{i_{c}})$ for some $2 \leq c \leq r$. Assume that $m \in \Delta^{\sigma}_{I}(
\{m_1,\hdots,m_{t}\})_{j}$ for some $1 \leq j \leq t$ and $\epsilon_{i_{c}} \in \mathfrak{S}_{t}$ as above. Therefore, 
\[
\deg_{\rho_{i_{c}}}(m_{\epsilon_{i_{c}}(j)})
\leq \langle m, \rho_{i_{c}} \rangle < \deg_{\rho_{i_{c}}}(m_{\epsilon_{i_{c}}(j+1)}))
.
\]
In particular, if $j < t$, we have $R_{m}^{\sigma}(
-\uk^{\epsilon_{i_c}(j+1)}) = \cdots = R_{m}^{\sigma}(
-\uk^{\epsilon_{i_c}(t)}) = 0$. It suffices to see that $R_{m}^{\sigma}(
-\uk^{\epsilon_{i_c}(p)}) = 0$ for $1 \leq p \leq j$. On the other hand, $m \in \Delta^{\sigma'}_{I}(
\{m_{\epsilon_{i_{c}}(1)},\hdots, m_{\epsilon_{i_{c}}(j)}\})$. We repeat the same argument for $\sigma' = \cone(\rho_{i_{1}},\hdots, \rho_{i_{c-1}})$ and the set of monomials $\{m_{\epsilon_{i_{c}}(1)},\hdots, m_{\epsilon_{i_{c}}(j)}\}$, and so on. This procedure stops either when $\dim(\sigma') = 1$ or when the set of monomials is empty. In the latter case, it follows straightforward that we arrived at $R_{m}^{\sigma}(
-\uk^{\epsilon_{i_1}(1)}) = \cdots = R_{m}^{\sigma}(
-\uk^{\epsilon_{i_1}(t)}) = 0$ and hence $I^{\sigma}_{m}=0$. In the case $\dim(\sigma') = 1$, assume the set of monomials is $\{n_{\epsilon_{i_{1}}(1)},\hdots, n_{\epsilon_{i_{1}}(s)}\} \subset \{m_{1},\hdots, m_{t}\}$ with $s\leq j$. Since we have
\[
s_{i_1} 
\leq \langle m,\rho_{i_{1}} \rangle < \deg_{\rho_{i_1}}(n_{\epsilon_{i_1}(1)})
,
\]
then $R_{m}^{\sigma}(
-\uk^{\epsilon_{i_1}(1)}) = \cdots = R_{m}^{\sigma}(
-\uk^{\epsilon_{i_1}(s)}) = 0$, and we obtain $I^{\sigma}_{m}
 = 0$. Analogously, if $m \in \cC^{\sigma}_{I}
 \setminus 
\Delta^{\sigma}_{I}
$ we get $R^{\sigma}_{m}(
-\uk^{p}) \neq 0$ for some $1 \leq p \leq t$, so  $I^{\sigma}_{m}
 \cong \bI$.

\vspace{2mm}
\noindent Finally, (\ref{Eq:Identity KlyFilt2Dec}) follows from a comparison with the description of $I^{\sigma}_{m}$ in Proposition~\ref{Prop:KlyFilt for monomomial ideal}.
\end{proof}

Combining Propositions \ref{Prop:KlyFilt for monomomial ideal} and \ref{Prop:From ideal to diagram}, the next result shows how to obtain the Klyachko diagram of the sum of two monomial ideals.

\begin{corollary}\label{Corollary:KlyDec of sum}
Let $\{(\cC^{\sigma}_{I},\Delta^{\sigma}_{I})\}$ and $\{(\cC^{\sigma}_{J},\Delta^{\sigma}_{J})\}$ be the Klyachko diagrams of two monomial ideals $I$ and $J$, respectively. Then, the Klyachko diagram of $I+J$ is given by
\[
\left\{
\begin{array}{@{}l@{\,}l@{\,}l@{}}
\cC^{\sigma}_{I+J}&=&\{m\in M\mid \langle m,\rho\rangle \geq \min\{s_{\rho}^{I},s_{\rho}^{J}\},\ \rho\in\sigma(1)\}\\
\Delta^{\sigma}_{I+J}&
=&(\Delta^{\sigma}_{I}\cap\Delta^{\sigma}_{J})\cup
(\Delta_{I}\cap(\cC^{\sigma}_{I+J}\setminus\cC^{\sigma}_{J}))\cup
(\Delta_{J}\cap(\cC^{\sigma}_{I+J}\setminus\cC^{\sigma}_{I}))\cup\\
&&(\cC^{\sigma}_{I+J}\setminus(\cC^{\sigma}_{I}\cup\cC^{\sigma}_{J})).
\end{array}
\right.
\]
\end{corollary}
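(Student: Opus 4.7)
The plan is to reduce the statement to two simple observations: one on how the supporting cones $\cC^{\sigma}$ behave under union of generators, and one on how the vanishing sets $\Delta^{\sigma}$ behave under sum of ideals. Both are consequences of Proposition~\ref{Prop:From ideal to diagram} and of the identity~(\ref{Eq:Identity KlyFilt2Dec}).

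Concretely, writing $I=(m_{1},\dotsc,m_{t})$ and $J=(n_{1},\dotsc,n_{u})$, so that $I+J=(m_{1},\dotsc,m_{t},n_{1},\dotsc,n_{u})$, I would first note that for every ray $\rho$ the minimum $\rho$-degree of a generator of $I+J$ is $s^{I+J}_{\rho}=\min\{s^{I}_{\rho},s^{J}_{\rho}\}$. Plugging this into the definition of $\cC^{\sigma}$ yields the first displayed formula and also shows $\cC^{\sigma}_{I},\cC^{\sigma}_{J}\subseteq \cC^{\sigma}_{I+J}$. In particular $\Delta^{\sigma}_{I}\subseteq\cC^{\sigma}_{I}\subseteq\cC^{\sigma}_{I+J}$ and similarly for $J$; these containments will be used freely.

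Next I would apply the first identity of (\ref{Eq:Identity KlyFilt2Dec}) of Proposition~\ref{Prop:From ideal to diagram} to the three ideals $I$, $J$ and $I+J$. Since the list of generators of $I+J$ is the concatenation of those of $I$ and $J$, we have
\[
\cC^{\sigma}_{I+J}\setminus\Delta^{\sigma}_{I+J}\;=\;\bigcup_{i}\cC^{\sigma}_{\uk^{i}}\cup\bigcup_{j}\cC^{\sigma}_{\ul^{j}}\;=\;(\cC^{\sigma}_{I}\setminus\Delta^{\sigma}_{I})\cup(\cC^{\sigma}_{J}\setminus\Delta^{\sigma}_{J}),
\]
where $\ul^{j}$ are the exponents of the $n_{j}$. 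Taking the complement inside $\cC^{\sigma}_{I+J}$ gives
\[
\Delta^{\sigma}_{I+J}\;=\;\cC^{\sigma}_{I+J}\setminus\bigl((\cC^{\sigma}_{I}\setminus\Delta^{\sigma}_{I})\cup(\cC^{\sigma}_{J}\setminus\Delta^{\sigma}_{J})\bigr).
\]

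The remaining step is an elementary set-theoretic computation, which I would carry out by partitioning $\cC^{\sigma}_{I+J}$ into the four disjoint pieces
\[
\cC^{\sigma}_{I}\cap\cC^{\sigma}_{J},\quad \cC^{\sigma}_{I}\setminus\cC^{\sigma}_{J},\quad \cC^{\sigma}_{J}\setminus\cC^{\sigma}_{I},\quad \cC^{\sigma}_{I+J}\setminus(\cC^{\sigma}_{I}\cup\cC^{\sigma}_{J}),
\]
and intersecting each piece with the right-hand side above. On $\cC^{\sigma}_{I}\cap\cC^{\sigma}_{J}$ a point lies in $\Delta^{\sigma}_{I+J}$ exactly when it is in both $\Delta^{\sigma}_{I}$ and $\Delta^{\sigma}_{J}$; on $\cC^{\sigma}_{I}\setminus\cC^{\sigma}_{J}$ the $J$-condition is automatic and one is left with $\Delta^{\sigma}_{I}$; symmetrically for the third piece; and the fourth piece lies entirely in $\Delta^{\sigma}_{I+J}$. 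The union of these four contributions is exactly the formula in the statement. No step is really an obstacle; the only care needed is to use $\Delta^{\sigma}_{I}\subseteq\cC^{\sigma}_{I}$ (and likewise for $J$) to identify $\Delta^{\sigma}_{I}\cap(\cC^{\sigma}_{I+J}\setminus\cC^{\sigma}_{J})$ with $\Delta^{\sigma}_{I}\cap(\cC^{\sigma}_{I}\setminus\cC^{\sigma}_{J})$ when checking the partition-wise equality.
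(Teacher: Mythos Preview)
Your proposal is correct and follows essentially the same route as the paper: both arguments compute $s^{I+J}_{\rho}=\min\{s^{I}_{\rho},s^{J}_{\rho}\}$ to obtain $\cC^{\sigma}_{I+J}$, use Propositions~\ref{Prop:KlyFilt for monomomial ideal} and~\ref{Prop:From ideal to diagram} to get $\cC^{\sigma}_{I+J}\setminus\Delta^{\sigma}_{I+J}=(\cC^{\sigma}_{I}\setminus\Delta^{\sigma}_{I})\cup(\cC^{\sigma}_{J}\setminus\Delta^{\sigma}_{J})$, and then take complements inside $\cC^{\sigma}_{I+J}$. The only cosmetic difference is the final set-theoretic unpacking: the paper applies De~Morgan to write $\Delta^{\sigma}_{I+J}$ as $(\Delta^{\sigma}_{I}\cup(\cC^{\sigma}_{I+J}\setminus\cC^{\sigma}_{I}))\cap(\Delta^{\sigma}_{J}\cup(\cC^{\sigma}_{I+J}\setminus\cC^{\sigma}_{J}))$ and then expands, whereas you partition $\cC^{\sigma}_{I+J}$ into four regions and read off each contribution.
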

\begin{proof}
We write $I=(\ux^{\uk^{1}},\dotsc,\ux^{\uk^{t}})$ and $J=(\ux^{\ul^{1}},\dotsc,\ux^{\ul^{s}})$. Then, $s^{I}_{j}=\min\{k^{1}_{j},\dotsc,k^{t}_{j}\}$, $s^{J}_{j}=\min\{l^{1}_{j},\dotsc,l^{s}_{j}\}$ and $I+J=(\ux^{\uk^{1}},\dotsc,\ux^{\uk^{t}},\ux^{\ul^{1}},\dotsc,\ux^{\ul^{s}})$. It follows that $s^{I+J}_{j}=\min\{s^{I}_{j},s^{J}_{j}\}$ and then 
\[
\cC^{\sigma}_{I+J}=\{m\in M\mid \langle m,\rho\rangle \geq \min\{s_{\rho}^{I},s_{\rho}^{J}\},\ \rho\in\sigma(1)\}.
\]
In particular, $\cC^{\sigma}_{I}$, $\cC^{\sigma}_{J}$ are contained in $\cC^{\sigma}_{I+J}$. By Propositions \ref{Prop:KlyFilt for monomomial ideal} and \ref{Prop:From ideal to diagram} we have $\cC^{\sigma}_{I+J}\setminus\Delta^{\sigma}_{I+J}=(\cC^{\sigma}_{I}\setminus\Delta^{\sigma}_{I})\cup
(\cC^{\sigma}_{J}\setminus\Delta^{\sigma}_{J}).$ Taking complementaries with respect to $\cC^{\sigma}_{I+J}$ it yields
\[
\begin{array}{@{}l@{}l@{}}
\Delta^{\sigma}_{I+J}
&=\cC^{\sigma}_{I+J}\setminus ((\cC^{\sigma}_{I}\setminus\Delta^{\sigma}_{I})\cup
(\cC^{\sigma}_{J}\setminus\Delta^{\sigma}_{J}))\\
&=(\cC^{\sigma}_{I+J}\setminus (\cC^{\sigma}_{I}\setminus \Delta^{\sigma}_{I}))\cap
(\cC^{\sigma}_{I+J}\setminus(\cC^{\sigma}_{J}\setminus \Delta^{\sigma}_{J}))\\
&=(\Delta^{\sigma}_{I}\cup(\cC^{\sigma}_{I+J}\setminus\cC^{\sigma}_{I}))\cap
(\Delta^{\sigma}_{J}\cup(\cC^{\sigma}_{I+J}\setminus\cC^{\sigma}_{J})),
\end{array}
\]
and the result follows.
\end{proof}

The following example illustrates Corollary \ref{Corollary:KlyDec of sum}.

\begin{example}
Let $R=\CC[x_{0},x_{1},x_2]$ be the Cox ring of $\PP^{2}$ with fan $\Sigma$ as in Example \ref{Example:Projective space and Hirzebruch}(i), and let $I=(x_{1}^{2}x_{2}^{4},x_{1}^{3}x_{2},x_{1}^{5})$ and $J=(x_{2}^{4},x_{1}x_{2}^{3},x_{1}^{4}x_{2}^{2})$ be two monomial ideals. Notice that $(s_{0}^{I},s_{1}^{I},s_{2}^{I})=(0,2,0)$ and $(s_{0}^{J},s_{1}^{J},s_{2}^{J})=(0,0,2)$, so $(s_{0}^{I+J},s_{1}^{I+J},s_{2}^{I+J})=(0,0,0)$. Hence, $\cC^{\sigma_{0}}_{I}=\{d_{1}\geq 2,d_{2}\geq 0\}$, 
$\cC^{\sigma_{0}}_{J}=\{d_{1}\geq 0,d_{2}\geq 2\}$ and $\cC^{\sigma_{0}}_{I+J}=\{d_{1}\geq 0,d_{2}\geq 0\}$; while $\cC^{\sigma_{1}}_{I}=\cC^{\sigma_{1}}_{J}=\cC^{\sigma_{1}}_{I+J}=\{d_{1}+d_{2}\leq 0,d_{2}\geq0\}$ and
$\cC^{\sigma_{2}}_{I}=\cC^{\sigma_{2}}_{J}=\cC^{\sigma_{2}}_{I+J}=\{d_{1}+d_{2}\leq 0,d_{1}\geq0\}$.

Computing the remaining Klyachko diagrams of $I$ and $J$ we obtain:
\[
\begin{array}{ll}
\Delta^{\sigma_{0}}_{I}=\{(2, 0), (2, 1), (2, 2), (2, 3), (3, 0), (4, 0)\},&\Delta^{\sigma_{1}}_{I}=\Delta^{\sigma_{2}}_{I}=\emptyset\\
\Delta^{\sigma_{0}}_{J}=\{(0, 2), (0, 3), (1, 2), (2, 2), (3, 2)\},&\Delta^{\sigma_{1}}_{J}=\Delta^{\sigma_{2}}_{J}=\emptyset.
\end{array}
\]
Thus, applying Corollary \ref{Corollary:KlyDec of sum} we get $\Delta^{\sigma_{1}}_{I+J}=\Delta^{\sigma_{2}}_{I+J}=\emptyset$ and
\[
\begin{array}{@{}l@{}l}
\Delta^{\sigma_{0}}_{I+J}=\{&
(0, 0), (0, 1), (0, 2), (0, 3), (1, 0), (1, 1), (1, 2), (2, 0), (2, 
1), (2, 2),\\
&(3, 0), (4, 0)\}.
\end{array}
\]
Figure \ref{Fig:Example sum} illustrates this example.
\end{example}

\begin{figure}[h]
\begin{tikzpicture}
    [
    	scale=0.5,
    	x=1cm,
    	y=1cm,
        dot/.style={
        	circle, 
       		fill=black,
       		inner xsep=0, 
       		inner ysep=2},
        dot2/.style={
        	circle,
        	fill=white,
        	inner xsep=0, 
        	inner ysep=1.2},
        dot3/.style={
        	circle,
        	fill=gray,
        	inner xsep=0, 
        	inner ysep=1}
    ]
\foreach \x in {-1,...,6}{
	\foreach \y in {-1,...,6}{
		\node[dot3] at (\x,\y){};
	}
}
\path[draw,dashed] (0,6) -- (0,-1);
\path[draw,dashed] (-1,0) -- (6,0);
\path[draw, dotted] (2,6) -- (2,0) -- (2,6);
\path[draw] (2,6) -- (2,4) -- (3,4) -- (3,1) -- (5,1) --(5,0) -- (6,0);
\path[fill=gray, fill opacity =0.1] (2,6) -- (2,4) -- (3,4) -- (3,1) -- (5,1) --(5,0) -- (6,0) --(6,6);
\end{tikzpicture}
\hfill
\begin{tikzpicture}
    [
    	scale=0.5,
    	x=1cm,
    	y=1cm,
        dot/.style={
        	circle, 
       		fill=black,
       		inner xsep=0, 
       		inner ysep=2},
        dot2/.style={
        	circle,
        	fill=white,
        	inner xsep=0, 
        	inner ysep=1.2},
        dot3/.style={
        	circle,
        	fill=gray,
        	inner xsep=0, 
        	inner ysep=1}
    ]
\foreach \x in {-1,...,6}{
	\foreach \y in {-1,...,6}{
		\node[dot3] at (\x,\y){};
	}
}
\path[draw,dashed] (0,6) -- (0,-1);
\path[draw,dashed] (-1,0) -- (6,0);
\path[draw, dotted] (0,6) -- (0,2) -- (6,2);
\path[draw] (0,6) -- (0,4) -- (1,4) -- (1,3) -- (4,3) -- (4,2) --(6,2);
\path[fill=gray, fill opacity =0.1] (0,6) -- (0,4) -- (1,4) -- (1,3) -- (4,3) -- (4,2) --(6,2) --(6,6);
\end{tikzpicture}
\hfill
\begin{tikzpicture}
    [
    	scale=0.5,
    	x=1cm,
    	y=1cm,
        dot/.style={
        	circle, 
       		fill=black,
       		inner xsep=0, 
       		inner ysep=2},
        dot2/.style={
        	circle,
        	fill=white,
        	inner xsep=0, 
        	inner ysep=1.2},
        dot3/.style={
        	circle,
        	fill=gray,
        	inner xsep=0, 
        	inner ysep=1}
    ]
\foreach \x in {-1,...,6}{
	\foreach \y in {-1,...,6}{
		\node[dot3] at (\x,\y){};
	}
}
\path[draw,dashed] (0,0) -- (0,-1);
\path[draw,dashed] (-1,0) -- (0,0);
\path[draw, dotted] (0,6) -- (0,0) -- (6,0);
\path[draw] (0,6) -- (0,4) -- (1,4) -- (1,3) -- (3,3)  -- (3,1) -- (5,1) --(5,0) -- (6,0);
\path[fill=gray, fill opacity =0.1] (0,6) -- (0,4) -- (1,4) -- (1,3) -- (3,3)  -- (3,1) -- (5,1) --(5,0) -- (6,0) -- (6,6);
\end{tikzpicture}

\caption{The part of the Klyachko diagram associated to the cone $\sigma_{0}$ of $I$, $J$ and $I+J$ respectively. The dotted part corresponds to $\cC^{\sigma_{0}}_{\bullet}$ and the shadowed part to $\cC^{\sigma_{0}}_{\bullet}\setminus \Delta^{\sigma_{0}}_{\bullet}$.}\label{Fig:Example sum}
\end{figure}
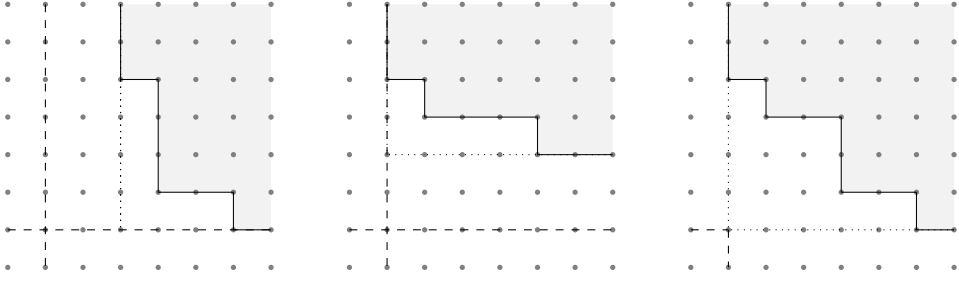

We end this subsection with more examples on the compution of Klyachko diagrams.
\begin{example}
Let $R=\CC[x_0,x_1,x_2,x_3]$ be the Cox ring of $\PP^{3}$ with fan $\Sigma$ as in Example \ref{Example:Projective space and Hirzebruch}(i), and let $I=(x_0x_1,x_1x_2x_3^2,x_2^2)$ be a monomial ideal. We have $s_0=s_1=s_2=s_3=0$ and 
\[
\begin{array}{ll}
\cC^{\rho_{0}}_{I}=\{(d_1,d_2,d_3)\mid d_1+d_2+d_3\leq 0\}&
\cC^{\rho_{1}}_{I}=\{(d_1,d_2,d_3)\mid d_1\geq0\}\\
\cC^{\rho_2}_{I}=\{(d_1,d_2,d_3)\mid d_2\geq0\}&
\cC^{\rho_3}_{I}=\{(d_1,d_2,d_3)\mid d_3\geq0\}.
\end{array}
\]
We compute $\Delta^{\sigma_0}_{I}$. We order the monomials with respect to $\rho_3$: $\deg_{\rho_3}(x_0x_1)=\deg_{\rho_3}(x_2^2)=0<\deg_{\rho_{3}}(x_1x_2x_3^2)=2$, and we obtain
\[
\begin{array}{l}
\Delta^{\sigma_{0}}_{I}(\cG)_{0}=\emptyset\\
\Delta^{\sigma_{0}}_{I}(\cG)_{1}=\emptyset\\
\Delta^{\sigma_{0}}_{I}(\cG)_{2}=\{(d_1,d_2,d_3)\mid 0\leq d_3< 2\}\cap \Delta^{\sigma_0'}_{I}(\{x_0x_1,x_2^2\})\\
\Delta^{\sigma_{0}}_{I}(\cG)_{3}=\{(d_1,d_2,d_3)\mid 2\leq d_3\}\cap \Delta^{\sigma_0'}_{I}(\{x_0x_1,x_2^2, x_1x_2x_3^2\}),
\end{array}
\]
where $\cG=\{x_0x_1,x_1x_2x_3^2,x_2^2\}$ and $\sigma_0'=\cone(\rho_1,\rho_2)$. We proceed computing $\Delta^{\sigma_0'}_{I}(\{x_0x_1,x_2^2\})$, ordering the two monomials with respect to $\rho_2$: $\deg_{\rho_{2}}(x_0x_1)=0<\deg_{\rho_{2}}(x_2^2)=2$. We get
\[
\begin{array}{l}
\Delta^{\sigma_0'}_{I}(\{x_0x_1,x_2^2\})_0=\emptyset\\
\Delta^{\sigma_0'}_{I}(\{x_0x_1,x_2^2\})_1=\{(d_1,d_2,d_3)\mid 0\leq d_2 < 2\}\cap \Delta^{\rho_1}_{I}(\{x_0x_1\})\\
\Delta^{\sigma_0'}_{I}(\{x_0x_1,x_2^2\})_2=\{(d_1,d_2,d_3)\mid 2\leq d_2\}\cap \Delta^{\rho_1}_{I}(\{x_0x_1,x_2^2\}),
\end{array}
\]
and $\Delta^{\rho_1}_{I}(\{x_0x_1\})=\{(d_1,d_2,d_3)\mid d_1=0\}$, while $\Delta^{\rho_1}_{I}(\{x_0x_1,x_2^2\})=\emptyset$. Hence,
$\Delta^{\sigma_0'}_{I}(\{x_0x_1,x_2^2\})=\{(d_1,d_2,d_3)\mid 0\leq d_2 < 2,\,d_1=0\}$, and 
\[
\Delta^{\sigma_0}_{I}(\cG)_2=\{(d_1,d_2,d_3)\mid0\leq d_3<2,\,0\leq d_2 < 2,\,d_1=0\}.\]
Similarly, we obtain $\Delta^{\sigma_0'}_{I}(\{x_0x_1,x_2^2, x_1x_2x_3^2\})\!=\!\Delta^{\sigma_0'}_{I}(\{x_0x_1,x_2^2, x_1x_2x_3^2\})_1\cup$ $\Delta^{\sigma_0'}_{I}(\{x_0x_1,x_2^2, x_1x_2x_3^2\})_2=\{(d_1,d_2,d_3)\mid 0\leq d_2\leq1,\,d_1=0\}$. Hence,
\[
\Delta^{\sigma_0}_{I}(\cG)_3=\{(d_1,d_2,d_3)\mid2\leq d_3,\,0\leq d_2\leq1,\,d_1=0\},\;\text{and}
\]
\[\Delta^{\!\sigma_0}_{I}\!\!=\!\!\{(0,d_2,d_3)\mid 0\leq d_3\leq 1,\,0\leq d_2\leq 1\}\cup\{(0,d_2,d_3)\mid 2\leq d_3,\,0\leq d_2\leq 1\}.\]
Applying the same procedure for the remaining cones, we get
\[
\begin{array}{l}
\Delta^{\sigma_{1}}_{I}=\{(-d_2-d_3,d_2,d_3)\mid 0\leq d_2,d_3\leq 1\}\cup
\{(-d_3,0,d_3)\mid 2\leq d_3\}\\
\Delta^{\sigma_{2}}_{I}=\emptyset\\
\Delta^{\sigma_{3}}_{I}=\{(0,d_2,d_3)\mid0\leq d_2\leq 1,\,d_3\leq -d_2\}\cup
\{(d_1,0,-d_1)\mid 0\leq d_1\}.
\end{array}
\]
We notice that in this example $\Delta^{\sigma_{1}}_{I}$ and $\Delta^{\sigma_{3}}_{I}$ are unbounded.
\end{example}

\begin{example}
Let $R=\CC[x_0,x_1,y_0,y_1]$ be the Cox ring of the Hirzebruch surface $\cH_3$ with fan $\Sigma$ as in Example \ref{Example:Projective space and Hirzebruch}(ii). $R$ is endowed with a $\ZZ^2-$grading such that $\deg(x_0)=\deg(x_1)=(1,0)$, $\deg(y_0)=(0,1)$ and $\deg(y_1)=(-3,1)$. We consider the monomial ideal $I=(x_{1}, x_{0}^{3}y_{1})$, so $s_{\rho_{0}}=s_{\rho_{1}}=s_{\eta_{0}}=s_{\eta_{1}}=0$. Thus,
\[
\begin{array}{ll}
\cC^{\rho_{0}}_{I}=\{(d_1,d_2)\mid d_1\leq 3d_2\}&\cC^{\rho_{1}}_{I}=\{(d_1,d_2)\mid d_1\geq0\}\\
\cC^{\eta_{0}}_{I}=\{(d_1,d_2)\mid d_2\leq 0\} & \cC^{\eta_{1}}_{I}=\{(d_1,d_2)\mid d_2\geq0\}.
\end{array}
\]
We compute $\Delta^{\sigma_{00}}$. We order the monomials with respect to $\eta_{1}$: $\deg_{\eta_{1}}(x_{1})=0<\deg_{\eta_{1}}(x_{0}^{3}y_{1})=1$. We have
\[
\begin{array}{l}
\Delta^{\sigma_{00}}_{I}(\cG)_{0}=\emptyset\\
\Delta^{\sigma_{00}}_{I}(\cG)_{1}=\{(d_1,d_2)\mid d_2=0\}\cap \Delta^{\rho_1}_{I}(\{x_1\})\\
\Delta^{\sigma_{00}}_{I}(\cG)_{2}=\{(d_1,d_2)\mid d_2=1\}\cap \Delta^{\rho_1}_{I}(\{x_1,x_0^{3}y_{1}\}),
\end{array}
\]
where $\cG=\{x_{1},x_{0}^{3}y_{1}\}$. Since $\Delta^{\rho_{1}}_{I}(\{x_1\})=\{(d_1,d_2)\mid d_1\geq0\}$ and $\Delta^{\rho_{1}}_{I}(\{x_1,$ $x_0^{3}y_{1}\})=\emptyset$, we obtain that $\Delta^{\sigma_{00}}_{I}=\{(0,0)\}$. Applying the same procedure, we arrive at $\Delta^{\sigma_{01}}_{I}=\Delta^{\sigma_{10}}_{I}=\Delta^{\sigma_{11}}_{I}=\emptyset$. (See figure \ref{Fig:Hirzebruch}).
\end{example}

\begin{figure}[h]
\begin{tikzpicture}
    [
    	scale=0.5,
    	x=1cm,
    	y=1cm,
        dot/.style={
        	circle, 
       		fill=black,
       		inner xsep=0, 
       		inner ysep=2},
        dot2/.style={
        	circle,
        	fill=white,
        	inner xsep=0, 
        	inner ysep=1.2},
        dot3/.style={
        	circle,
        	fill=gray,
        	inner xsep=0, 
        	inner ysep=1}
    ]
\foreach \x in {-3,...,6}{
	\foreach \y in {-3,...,3}{
		\node[dot3] at (\x,\y){};
	}
}
\node[draw] at (0,0) {};
\foreach \x in {0,...,6}{
	\foreach \y in {0,...,3}{
        \node[dot] at (\x,\y){ };
    }
}
\node[dot2] at (0,0) {};
\end{tikzpicture}
\hfill
\begin{tikzpicture}
     [
    	scale=0.5,
    	x=1cm,
    	y=1cm,
        dot/.style={
        	circle, 
       		fill=black,
       		inner xsep=0, 
       		inner ysep=2},
        dot2/.style={
        	circle,
        	fill=white,
        	inner xsep=0, 
        	inner ysep=1.2},
        dot3/.style={
        	circle,
        	fill=gray,
        	inner xsep=0, 
        	inner ysep=1}
    ]
\foreach \x in {-4,...,9}{
	\foreach \y in {-3,...,3}{
		\node[dot3] at (\x,\y){};
	}
}
\foreach \y [evaluate=\y as \b using \y*3] in {0,...,3}{
	\foreach \x in {-4,...,\b}{
		\node[dot] at (\x,\y){};
	}
}
\node[draw] at (0,0) {};
\end{tikzpicture}

\vspace{5mm}
\begin{tikzpicture}
     [
    	scale=0.5,
    	x=1cm,
    	y=1cm,
        dot/.style={
        	circle, 
       		fill=black,
       		inner xsep=0, 
       		inner ysep=2},
        dot2/.style={
        	circle,
        	fill=white,
        	inner xsep=0, 
        	inner ysep=1.2},
        dot3/.style={
        	circle,
        	fill=gray,
        	inner xsep=0, 
        	inner ysep=1}
    ]
\foreach \x in {-6,...,4}{
	\foreach \y in {-2,...,3}{
		\node[dot3] at (\x,\y){};
	}
}
\foreach \y [evaluate=\y as \b using \y*3] in {-2,...,0}{
	\foreach \x in {\b,...,4}{
		\node[dot] at (\x,\y){};
	}
}
\node[draw] at (0,0) {};
\end{tikzpicture}
\hfill
\begin{tikzpicture}
    [
    	scale=0.5,
    	x=1cm,
    	y=1cm,
        dot/.style={
        	circle, 
       		fill=black,
       		inner xsep=0, 
       		inner ysep=2},
        dot2/.style={
        	circle,
        	fill=white,
        	inner xsep=0, 
        	inner ysep=1.2},
        dot3/.style={
        	circle,
        	fill=gray,
        	inner xsep=0, 
        	inner ysep=1}
    ]
\foreach \x in {-3,...,6}{
	\foreach \y in {-2,...,3}{
		\node[dot3] at (\x,\y){};
	}
}
\foreach \x in {0,...,6}{
	\foreach \y in {-2,...,0}{
        \node[dot] at (\x,\y){ };
    }
\node[draw] at (0,0) {};
}
\end{tikzpicture}
\caption{Klyachko diagram of Example \ref{Example:Klyachko diagram algorithm} (iii). It displays $\Delta^{\sigma_{ij}}_{I}$ ($\circ$) inside $\cC^{\sigma_{ij}}_{I}$ ($\bullet$) for $(i,j)=(0,0),$ $(1,0),$ $(0,1),$ $(1,1)$, clockwise. In each picture $\square$ places the origin $(0,0)\in M\cong\ZZ^{2}$.}\label{Fig:Hirzebruch}
\end{figure}
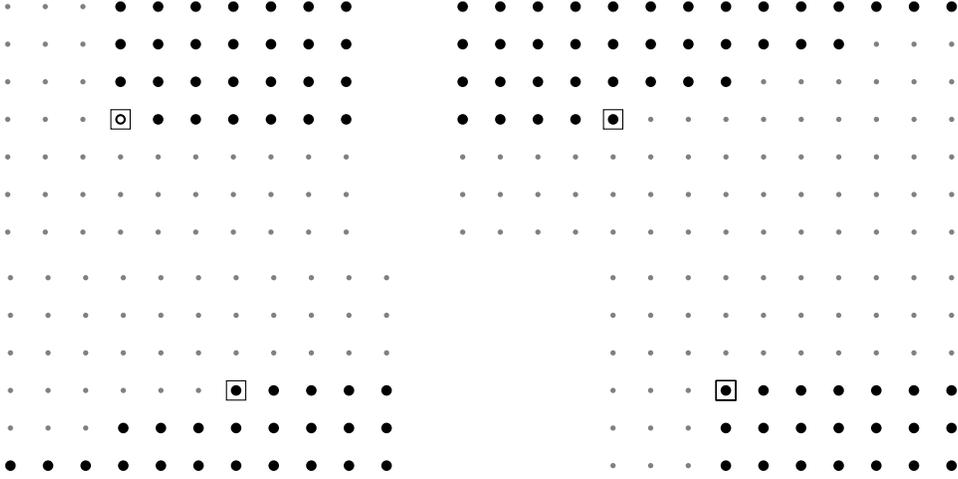
\subsection{From a Klyachko diagram to a monomial ideal}\label{Section:From Klyachko diagram to ideal}

\vspace{0.5cm}
Our next goal is to find the minimal set of monomials generating the saturated ideal $I$ associated to a Klyachko diagram $\{(\cC^{\sigma}_{I},\Delta^{\sigma}_{I})\}_{\sigma \in \Sigma}$. 
We may assume that $\Delta^{\sigma} \neq \emptyset$ for some cone $\sigma \in \Sigma$. Otherwise, by Proposition \ref{Prop:From ideal to diagram} and Lemma \ref{Lemma:KlyFilt for monomial} the $\Sigma-$family of $I$ would be the $\Sigma-$family of a principal monomial ideal. Then, $I=(x_{1}^{s_{1}}\dotsb x_{r}^{s_{r}})$, where $s_{i}\in\ZZ$ such that $\cC^{\rho_{j}}_{I}=\{m\in M\mid\langle m,\rho_{j}\rangle \geq s_{j}\}$.

Since $I$ is $\Cl(X)-$graded and finitely generated, the monomials minimally generating $I$ belong in a finite number of homogeneous pieces.  For any $D=(a_{\rho})_{\rho\in\Sigma(1)}\in\ZZ^{r}$, we denote by $[D]=\sum_{\rho\in\Sigma(1)}a_{\rho}[D_{\rho}]\in\Cl(X)$ the class of the corresponding Weil divisor in $X$. We first start by providing a monomial basis of $I_{[D]}\subset R_{[D]}$.

\begin{lemma}\label{Lemma:generators of I}
Let $\{(\cC^{\sigma}_{I},\Delta^{\sigma}_{I})\}_{\sigma\in\Sigma}$ be a Klyachko diagram of a $B-$satu\-rated monomial ideal $I$, and $D = (a_{\rho})_{\rho\in\Sigma(1)}\in\ZZ^{r}$. Then,
\[I_{[D]}=\CC\left\langle
\ux^{m+D}|m\in\bigcap_{\sigma\in\Sigma_{\max}}(\cC^{\sigma}_{I}(D)\setminus \Delta^{\sigma}_{I}(D))\right\rangle.\]
\end{lemma}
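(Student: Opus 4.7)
The plan is to combine the $B-$saturation hypothesis on $I$ with the characterization of global sections from Proposition \ref{Proposition:Torsionfrees} in order to realize $I_{[D]}$ as an intersection of Klyachko filtrations over the maximal cones, and then to invoke Proposition \ref{Prop:From ideal to diagram} (applied to the twist $I(D)$) to translate the non-vanishing condition into the combinatorial condition involving the shifted diagram.

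First I would use Proposition \ref{Proposition:line bundles and quasi-coherent sheaves}(v) together with $B-$saturation of $I$ to obtain the natural identification $I_{[D]}\cong H^{0}(X,\widetilde{I}([D]))$. Pulling back the $M-$grading coming from the exact sequence (\ref{Eq:Class group exact sequence}), this decomposes as
\[
I_{[D]} \;=\; \bigoplus_{m\in M} I_{\phi(m)+D},
\]
where each summand is either $0$ or the line spanned by the monomial $\ux^{m+D}$ (identifying $m$ with $\phi(m)$). The claim thus reduces to determining for which $m\in M$ the summand $I_{\phi(m)+D}$ is non-zero.

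Next I would apply Proposition \ref{Proposition:Torsionfrees} to the shifted module $I(D)$, which is again $\ZZ^{r}-$graded, $B-$saturated and torsion-free of rank $1$ with associated sheaf $\widetilde{I}([D])$. This gives
\[
I_{\phi(m)+D} \;=\; H^{0}(X,\widetilde{I}([D]))_{m} \;=\; \bigcap_{\sigma\in\Sigma_{\max}} (I(D))^{\sigma}_{m}.
\]
Since each non-zero $(I(D))^{\sigma}_{m}$ coincides with the fixed ambient line $\bI$, the intersection over $\sigma\in\Sigma_{\max}$ is either $\bI$ or $0$; it equals $\bI$ precisely when every factor is non-zero, in which case it is spanned by $\ux^{m+D}$.

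Finally, I would invoke Proposition \ref{Prop:From ideal to diagram} applied to $I(D)$ to characterize the vanishing locus of the filtration $\{(I(D))^{\sigma}_{m}\}$: one has $(I(D))^{\sigma}_{m}\cong \bI$ iff $m\in \cC^{\sigma}_{I}(D)\setminus \Delta^{\sigma}_{I}(D)$. Assembling this with the previous step and summing over $m\in M$ yields exactly the monomial basis claimed in the statement. The main obstacle I anticipate is the last step: one must verify that the shift-by-$D$ notation $(\cC^{\sigma}_{I}(D),\Delta^{\sigma}_{I}(D))$ genuinely describes the Klyachko diagram of the twisted module $I(D)$. This is a bookkeeping exercise tracking how the recursive construction of $\Delta^{\sigma}_{I}(\cdot)$ and the numbers $s_{\rho}$ transform under translation of multi-degrees by $D$; once this is made precise, the rest of the argument is a direct combination of the vector-space identity above with Propositions \ref{Proposition:line bundles and quasi-coherent sheaves}(v), \ref{Proposition:Torsionfrees} and \ref{Prop:From ideal to diagram}.
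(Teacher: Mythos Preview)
Your proposal is correct and follows essentially the same route as the paper: identify $I_{[D]}$ with $H^{0}(X,\widetilde{I}(D))$ via $B$-saturation, use the intersection formula over maximal cones from Proposition~\ref{Proposition:Torsionfrees}, and then invoke Proposition~\ref{Prop:From ideal to diagram} on the twist to translate non-vanishing into the combinatorial condition. The bookkeeping obstacle you flag about the shifted diagram is exactly what the paper isolates and justifies in Remark~\ref{Remark:KlyDec for a shift} immediately after the lemma.
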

\begin{proof}
For any cone $\sigma\in\Sigma$ we have that $$I_{[D]}^{\sigma}=I^{\sigma}_{0}(D)=\bigoplus_{m\in M} I(D)_{m}, \text{ and } $$ $$R^{\sigma}_{[D]}=\CC\langle \ux^{m+D}\mid m\in\cC^{\sigma}_{0}(D)\rangle.$$
On the other hand, for any $m\in M$ we have 
\[
I(D)_{m}\cong H^{0}(X,\tilde{I}(D))_{m}\cong\bigcap_{\sigma\in\Sigma_{\max}}I^{\sigma}(D)_{m}.
\] By Proposition \ref{Prop:From ideal to diagram}, for any cone $\sigma\in\Sigma$, $I^{\sigma}_{m}(D)\neq0$ if and only if $m\in \cC^{\sigma}_{I}(D)\setminus \Delta^{\sigma}_{I}(D)$. Hence,
\[
I_{[D]}=\bigoplus_{m\in M} \bigcap_{\sigma\in\Sigma_{\max}}I^{\sigma}_{m}(D)\hspace{3mm}=
\hspace{-10mm}
\bigoplus_{
{m\in\hspace{-4mm}\displaystyle{
\bigcap_{\sigma\in\Sigma_{\max}}
}\hspace{-3mm}(\cC^{\sigma}_{I}(D)\setminus \Delta^{\sigma}_{I}(D))}
}
\hspace{-8mm} \CC\langle \ux^{m+D}\rangle
\]
and the lemma follows.
\end{proof}

The following remark shows how shifting by a multidegree $D\in\ZZ^{r}$ affects the Klyachko diagram.
\begin{remark}\label{Remark:KlyDec for a shift}
Since $X$ is smooth, for any $D=(a_{\rho})_{\rho\in\Sigma(1)}\in\ZZ^{r}$ and any cone $\sigma\in\Sigma$, there is a character $\tau_{\sigma}\in M$ such that $\langle \tau_{\sigma},\rho\rangle=a_{\rho}$ for all $\rho\in\sigma(1)$. Then, for any $m\in M$ and $\rho\in\sigma(1)$, $\langle m,\rho\rangle+a_{\rho}=\langle m+\tau_{\sigma},\rho\rangle$, so $R^{\sigma}_{m}(D)=R^{\sigma}_{m+\tau_{\sigma}}$. It follows from (\ref{Eq:Presentation monomial ideal}) that $I^{\sigma}_{m}(D)=I^{\sigma}_{m+\tau_{\sigma}}$. Therefore, the Klyachko diagram of the shifted monomial ideal $I(D)$ is given by
\[
\left\{
\begin{array}{@{}l@{}l@{}l}
\cC^{\sigma}_{I(D)}&=\cC^{\sigma}_{I}(D)&:=\cC^{\sigma}_{I}+\tau_{\sigma}\\
\Delta^{\sigma}_{I(D)}&=\Delta^{\sigma}_{I}(D)&:=\Delta^{\sigma}_{I}+\tau_{\sigma},
\end{array}
\right.
\]
which is obtained applying translations to the original Klyachko diagram.
\end{remark}

By Lemma \ref{Lemma:generators of I} we  already know a basis of each homogeneous piece of $I$. Our next task is to characterize which monomials in a homogeneous piece $I_{[E]}$ are divisible by a single monomial $x^{m+D}$. The following Lemma answers this question.

\begin{lemma}\label{Lemma:Span of a monomial}
Let $\{(\cC^{\sigma}_{I},\Delta^{\sigma}_{I})\}_{\sigma\in\Sigma}$ be a Klyachko diagram of a $B-$satu\-rated monomial ideal $I$, and $D = (a_{\rho})_{\rho\in\Sigma(1)}\in\ZZ^{r}$. Let $m\in\bigcap_{\Sigma_{\max}}(\cC^{\sigma}_{I}(D)\setminus \Delta^{\sigma}_{I}(D))$ and let $E=(b_{\rho})_{\rho\in\Sigma(1)}\in\ZZ^{r}$ be such that $b_{\rho}\geq a_{\rho}$ for all $\rho\in\Sigma(1)$. The set of monomials in $I_{[E]}$ which are divisible by $\ux^{m+D}$ is
\[
T_{E}(\ux^{m+D}):=\left\{\ux^{m'+E}\middle.\middle|
\langle m',\rho\rangle\geq\langle m,\rho\rangle+a_{\rho}-b_{\rho}, \rho\in\Sigma(1)\right\}.
\]
\end{lemma}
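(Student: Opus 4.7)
The plan is to establish the set equality by double inclusion, in each direction reducing to a routine comparison of exponents of monomials in the Cox ring $R$. The key observation is that a monomial $\ux^{\underline{\alpha}}$ divides $\ux^{\underline{\beta}}$ in $R$ if and only if $\alpha_\rho\leq\beta_\rho$ for every ray $\rho\in\Sigma(1)$, so the entire statement will come from rewriting the divisibility inequalities in terms of the pairings $\langle m,\rho\rangle$ and $\langle m',\rho\rangle$ that appear in the exponents $\langle m,\rho\rangle+a_\rho$ and $\langle m',\rho\rangle+b_\rho$ of $\ux^{m+D}$ and $\ux^{m'+E}$ respectively.

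For the inclusion $(\supseteq)$, I would take $\ux^{m'+E}\in T_E(\ux^{m+D})$. The defining inequality $\langle m',\rho\rangle\geq\langle m,\rho\rangle+a_\rho-b_\rho$ rearranges to $\langle m',\rho\rangle+b_\rho\geq\langle m,\rho\rangle+a_\rho$. By Lemma \ref{Lemma:generators of I}, $\ux^{m+D}$ is a genuine monomial in $I_{[D]}\subset I$, in particular $\langle m,\rho\rangle+a_\rho\geq 0$ for every $\rho$; hence the exponent $\langle m',\rho\rangle+b_\rho$ of $\ux^{m'+E}$ is also nonnegative, so $\ux^{m'+E}\in R$ and $\ux^{m+D}\mid\ux^{m'+E}$. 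Since the $\Cl(X)$-degree of $\ux^{m'+E}$ is $[\Div(\chi^{m'})+E]=[E]$ and $\ux^{m+D}\in I$, the multiple $\ux^{m'+E}$ lies in $I_{[E]}$.

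For the reverse inclusion $(\subseteq)$, suppose $\ux^{m'+E}\in I_{[E]}$ is divisible by $\ux^{m+D}$. Comparing exponents ray by ray yields $\langle m',\rho\rangle+b_\rho\geq\langle m,\rho\rangle+a_\rho$, which is precisely the condition $\langle m',\rho\rangle\geq\langle m,\rho\rangle+a_\rho-b_\rho$ defining $T_E(\ux^{m+D})$.

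I do not anticipate any real obstacle: this lemma is essentially a translation between classical monomial divisibility in $R$ and the linear forms $\langle\cdot,\rho\rangle$ governing the $\Cl(X)$-grading through the exact sequence (\ref{Eq:Class group exact sequence}). The only point requiring a brief verification is that elements of $T_E(\ux^{m+D})$ produce valid (nonnegatively exponentiated) monomials, which follows at once from the hypothesis that $\ux^{m+D}$ is itself a monomial in $I_{[D]}$; the standing assumption $b_\rho\geq a_\rho$ merely guarantees that $T_E(\ux^{m+D})$ can be nonempty and is consistent with $E-D$ being effective.
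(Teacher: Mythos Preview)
Your proof is correct and follows essentially the same approach as the paper: both reduce to the elementary observation that $\ux^{m+D}\mid \ux^{m'+E}$ in $R$ if and only if $\langle m',\rho\rangle+b_\rho\geq\langle m,\rho\rangle+a_\rho$ for every $\rho\in\Sigma(1)$. The paper records this in a single sentence, while you spell out explicitly why the elements of $T_E(\ux^{m+D})$ are genuine monomials of $I_{[E]}$; no substantive difference in method.
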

\begin{proof}
For any $m'\in M$, the monomial $\ux^{m'+E}$ is divisible by $\ux^{m+D}$ if and only if $\langle m',\rho\rangle+b_{\rho}\geq \langle m,\rho\rangle+a_{\rho}$ for any $\rho\in\Sigma(1)$, and the lemma follows.
\end{proof}

Now, let $\cG=\{\ux^{m_{1}+\uk^{1}},\dotsc,\ux^{m_{t}+\uk^{t}}\}$ be a finite set of monomials of possibly different degrees. For any $E=(b_{\rho})_{\rho\in\Sigma(1)}\in\ZZ^{r}$ such that $b_{\rho}\geq k^{i}_{\rho}$ for $\rho\in\Sigma(1)$ and $1\leq i\leq t$, we define 
\[
T_{E}\cG:=T_{E}(x^{m_{1}+\uk^{1}})\cup\dotsb\cup T_{E}(\ux^{m_{t}+\uk^{t}}),
\]
which describes the span of the monomials of $\cG$ inside $I_{[E]}$.

Finally, we can describe a finite set of generators of a $B-$saturated monomial ideal $I$ corresponding to a given Klyachko diagram.
Since $X$ is smooth we can assume that $\Cl(X)\cong\ZZ\langle [D_{\rho_{i_{1}}}],\dotsc,[D_{\rho_{i_{\ell}}}]\rangle\cong\ZZ^{\ell}$. Up to permutation of variables, we may also assume that $i_{1}=1,\dotsc,i_{\ell}=\ell$, and for any $a=(a_{1},\dotsc,a_{\ell})\in\ZZ^{\ell}$ we set $\overline{a}:=(a_{1},\dotsc,a_{\ell},0,\dotsc,0)\in\ZZ^{r}$. For any $u,v\in\ZZ^{\ell}$ we say that $v\preceq u$, if $u_{i}\geq v_{i}$
 for $1\leq i\leq \ell$, which defines a partial order.
We set $\cG_0 = \emptyset$ and for any $u\in\ZZ^{\ell}$, we define 
\[
\cG_{u} := \left\{\ux^{m+\overline{u}}\mid m\in\bigcap_{\sigma \in \Sigma_{\max}} \cC^{\sigma}_{I}(\overline{u})\setminus \Delta^{\sigma}_{I}(\overline{u})\right\}\setminus \bigcup_{v \preceq u} T_{\overline{u}}\cG_{v}
\]
assuming we have determined $\cG_{v}$ for any $v\preceq u$. Since $I$ is finitely generated, there are only finitely many degrees $u\in\ZZ^{\ell}$ such that $\cG_{u}\neq \emptyset$. 

If $R$ is the Cox ring of $\PP^{r-1}$, and so $R$ has the standard $\ZZ-$grading, then by construction that method gives directly a minimal set of monomial generators for $I$. The following example illustrates the method:

\begin{example}
(i) Let $R=\CC[x_{0},x_{1},x_2]$ be the Cox ring of $\PP^{2}$ (See Example \ref{Example:Projective space and Hirzebruch}(i)). Consider the following Klyachko diagram $\{ (\cC^{i}_{I},\Delta^{i}_{I})\}_{0\leq i\leq 2}$, where $\cC^{i}_{I}$ and $\Delta^{i}_{I}$ stand for $\cC^{\sigma_{i}}_{I}$ and $\Delta^{\sigma_{i}}_{I}$:
\[
\begin{array}{l}
\cC^{0}_{I}=\{(d_{1},d_{2})\mid d_{1}\geq0,\, d_{2}\geq0\}\\
\cC^{1}_{I}=\{(d_{1},d_{2})\mid d_{1}+d_{2}\leq0,\,d_{2}\geq0\}\\
\end{array}
\cC^{2}_{I}=\{(d_{1},d_{2})\mid d_{1}+d_{2}\leq0,\,d_{1}\geq0\},
\]
$\Delta^{0}_{I}=\{(0,0),(1,0)\}$ and  $\Delta^{1}_{I}=\Delta^{2}_{I}=\{(0,0)\}$. Applying the above procedure, we obtain that $\cG_{0}=\cG_{1}=\emptyset$, $\cG_{2}=\{x_{0}x_{2},x_1x_2\}$, $\cG_{3}=\{x_{0}x_{1}^{2}\}$ and $\cG_{j}=\emptyset$ for $j\geq 4$. Hence, the saturated monomial ideal corresponding to this Klyachko diagram is $I=(x_{0}x_{2},x_1x_2,x_{0}x_{1}^{2})$.

(ii) Let $R=\CC[x_{0},x_{1},x_2, x_{3}]$ be the Cox ring of $\PP^{3}$ (See Example \ref{Example:Projective space and Hirzebruch}(i)). Consider the following Klyachko diagram $\{(\cC^{i}_{I},\Delta^{i}_{I})\}_{0\leq i\leq 3}$, where $\cC^{i}_{I}$ and $\Delta^{i}_{I}$ stand for $\cC^{\sigma_{i}}_{I}$ and $\Delta^{\sigma_{i}}_{I}$:
\[
\begin{array}{l}
\cC^{0}_{I}=\{(d_{1},d_{2},d_{3})\mid d_{1}\geq0,\, d_{2}\geq0, d_{3} \geq 0\}\\
\cC^{1}_{I}=\{(d_{1},d_{2},d_{3})\mid d_{1}+d_{2}+d_{3}\leq0,\,d_{2}\geq0, d_{3}\geq0\}\\
\cC^{2}_{I}=\{(d_{1},d_{2},d_{3})\mid d_{1}+d_{2}+d_{3}\leq0,\,d_{1}\geq0,d_{3} \geq 0\}\\
\cC^{3}_{I}=\{(d_{1},d_{2},d_{3}) \mid d_{1}+d_{2}+d_{3}\leq0, \, d_{1}\geq 0,d_{2}\geq0\},
\end{array}
\]
$\Delta^{0}_{I}=\{(0,0,d_{3})\mid d_{3}\geq0\}\cup\{(0,1,0)\}$ and $\Delta^{1}_{I}=\Delta^{2}_{I}=\Delta^{3}_{I} = \emptyset$. Applying the above procedure, we obtain that $\cG_{0}= \emptyset$, $\cG_{1} = \{x_1\}$, $\cG_{2}=\{x_0^2x_1, x_1^2, x_1x_2, x_1x_3, x_2^2, x_2x_3\} \setminus T_{(2,0,0,0)}\{x_1\} = \{x_2^2,x_2x_3\}$ and $\cG_{j} = \emptyset$ for $j \geq 3$. Hence, the saturated monomial ideal corresponding to this Klyachko diagram is $I=(x_1,x_2^2,x_2x_3)$.

\end{example}
In more general gradings, we cannot assure that this method gives a {\em minimal} set of generators of $I$, but a finite set of monomials generating $I$. However, we can extract from it a minimal set of monomials generating $I$ by using suitable monomial divisions. The following example illustrates this situation:

\begin{example}
Let $R=\CC[x_{0},x_{1},y_{0},y_{1}]$ be the Cox ring of the Hirzebruch surface $\cH_{3}$ (see Example \ref{Example:Projective space and Hirzebruch}(ii)). In particular, $R$ is $\ZZ^{2}-$graded with $\deg(x_{0})=\deg(x_{1})=(1,0)$, $\deg(y_{0})=(0,1)$ and $\deg(y_{1})=(-3,1)$. Let $\{(\cC^{\sigma_{ij}}_{I},\Delta^{\sigma_{ij}}_{I})\}_{0\leq i,j\leq 1}$ be the Klyachko diagram of Example \ref{Example:Klyachko diagram algorithm} (iii).
Applying the above procedure we obtain that $\cG_{u}=\emptyset$ for any $u\in\ZZ^{2}\setminus\{(1,0), (0,1)\}$, and
$\cG_{(1,0)}=\{x_{1}\}$ and $\cG_{(0,1)}=\{x_0^3y_1,x_0^2x_1y_1, x_0x_1^2y_1,x_1^3y_1\}$. Hence $\{x_{1},x_0^3y_1,x_0^2x_1y_1, x_0x_1^2y_1,x_1^3y_1\}$ is a set of generators for a saturated monomial ideal $I$ corresponding to this Klyachko diagram. However, the first monomial divides the three last monomials. Therefore, $I$ is {\em minimally} generated by $\{x_{1}, x_{0}^{3}y_{1}\}$. 
\end{example}

\subsection{Non-saturated monomial ideals}\label{Section:HH1}

The previous subsections have shown that the theory of Klyachko diagrams is well suited to describe saturated monomial ideals, but we cannot retrieve directly information of non-saturated monomial ideals. In this subsection, we describe the quotient $I^{\sat}/I\cong H^{1}_{B}(I)$ using the Klyachko diagram $\{ (\cC^{\sigma}_{I},\Delta^{\sigma}_{I}) \} _{\sigma \in \Sigma}$ and the generators of $I$.

\begin{proposition}\label{Corollary:local HH1}
Let $I=(\ux^{m_{1}+\uk^{t}},\dotsc,\ux^{m_{t}+\uk^{t}})$ be a monomial ideal with Klyachko diagram $\{(\cC^{\sigma}_{I},\Delta^{\sigma}_{I})\} _{\sigma \in \Sigma}$, such that for $1\leq i\leq t$, $m_{i}\in M$ and $\uk^{i}=(k^{i}_{\rho})_{\rho\in\Sigma(1)}\in\ZZ^{r}$ satisfy $\langle m_{i},\rho\rangle+k^{t}_{\rho}\geq0$ for all $\rho\in\Sigma(1)$. Then, for any $D=(a_\rho)_{\rho\in\Sigma(1)}\in\ZZ^{r}$, 

\[
H_{B}^{1}(I)_{[D]}\!\cong\!
\CC\left\langle \!\left\{\ux^{m+D}\middle|\middle. m\in\hspace{-4mm}\bigcap_{\sigma\in\Sigma_{\max}}\hspace{-3mm}(\cC^{\sigma}_{I}(D)\setminus\Delta^{\sigma}_{I}(D))\!\right\}\!\setminus \bigcup_{i=1}^{t}T_{D}(\ux^{m_{i}+\uk^{i}})\!\right\rangle.
\]
\end{proposition}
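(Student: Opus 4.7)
The plan is to reduce the statement to describing the quotient $I^{\sat}/I$ monomially, and then apply Lemma \ref{Lemma:generators of I} and Lemma \ref{Lemma:Span of a monomial} to identify the relevant monomial bases.

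First, since $R$ is a domain and $I$ is an ideal of $R$, $I$ is torsion-free and in particular $H^{0}_{B}(I) = 0$. Applying Proposition \ref{Proposition:line bundles and quasi-coherent sheaves}(v) to $E = I$, and using that $H^{0}_{\ast}(X,\widetilde{I}) = I^{\sat}$, yields the short exact sequence
\[
0 \to I \to I^{\sat} \to H^{1}_{B}(I) \to 0,
\]
so $H^{1}_{B}(I)_{[D]} \cong I^{\sat}_{[D]}/I_{[D]}$ for every $D \in \ZZ^{r}$. The work is now reduced to comparing monomial bases of the two pieces.

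By Lemma \ref{Lemma:generators of I}, the degree-$[D]$ piece $I^{\sat}_{[D]}$ has the monomial basis $\{\ux^{m+D} \mid m \in \bigcap_{\sigma \in \Sigma_{\max}}(\cC^{\sigma}_{I}(D) \setminus \Delta^{\sigma}_{I}(D))\}$, which is precisely the set inside the braces on the right-hand side of the statement. For the subspace $I_{[D]}$, every monomial of it is a multiple in $R$ of at least one of the generators $\ux^{m_{i}+\uk^{i}}$. Applying Lemma \ref{Lemma:Span of a monomial} with $E = D$ to each generator in turn identifies the monomials of $I_{[D]}$ divisible by $\ux^{m_{i}+\uk^{i}}$ as exactly the set $T_{D}(\ux^{m_{i}+\uk^{i}})$, so $I_{[D]}$ is the $\CC$-span of $\bigcup_{i=1}^{t} T_{D}(\ux^{m_{i}+\uk^{i}})$. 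To finish, observe that both $I^{\sat}_{[D]}$ and $I_{[D]}$ are spanned by monomials of $R$, and the inclusion $I \subset I^{\sat}$ forces $\bigcup_{i} T_{D}(\ux^{m_{i}+\uk^{i}})$ to lie inside the monomial basis of $I^{\sat}_{[D]}$. Consequently the quotient $H^{1}_{B}(I)_{[D]}$ has a monomial basis given by the set-theoretic difference of the two monomial sets, which is the formula claimed.

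The most delicate point I expect is the application of Lemma \ref{Lemma:Span of a monomial}: it is stated with a hypothesis $b_{\rho} \geq a_{\rho}$, which in our application corresponds to $D - \uk^{i}$ being effective componentwise. When this fails for some $i$, no monomial of degree $[D]$ can be divisible by the $i$-th generator and $T_{D}(\ux^{m_{i}+\uk^{i}})$ is then vacuously empty, so the formula continues to make sense. The standing hypothesis $\langle m_{i}, \rho \rangle + k^{i}_{\rho} \geq 0$ guarantees that each $\ux^{m_{i}+\uk^{i}}$ is an honest monomial of $R$, making the whole divisibility analysis legitimate.
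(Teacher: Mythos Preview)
Your proof is correct and follows essentially the same approach as the paper: identify $H^{1}_{B}(I)_{[D]}$ with $I^{\sat}_{[D]}/I_{[D]}$, describe $I^{\sat}_{[D]}$ via Lemma~\ref{Lemma:generators of I}, describe $I_{[D]}$ via Lemma~\ref{Lemma:Span of a monomial}, and take the set-theoretic difference of the monomial bases. You are simply more explicit than the paper in justifying the isomorphism $H^{1}_{B}(I)\cong I^{\sat}/I$ from the exact sequence in Proposition~\ref{Proposition:line bundles and quasi-coherent sheaves}(v) together with $H^{0}_{B}(I)=0$, and in commenting on the edge case where $D-\uk^{i}$ is not componentwise nonnegative.
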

\begin{proof}
From Lemma \ref{Lemma:Span of a monomial},
$
I_{[D]}=\bigcup_{i=1}^{t}T_{D}(\ux^{m_{i}+\uk^{i}}) \subset I^{sat}_{[D]}
$. By Proposition \ref{Prop:From ideal to diagram}, the Klyachko diagram characterizes the saturation of $I$, and by Lemma \ref{Lemma:generators of I} we have
$
I^{sat}_{[D]}=\CC\langle \ux^{m+D}|$ $ m\in\bigcap_{\sigma\in\Sigma_{\max}}(\cC^{\sigma}_{I}(D)\setminus\Delta^{\sigma}_{I}(D))\rangle$, and the result follows.
\end{proof}

\begin{example}
Let $R=\CC[x_{0},x_{1},x_{2}]$ be the Cox ring of $\PP^{2}$, $B=(x_0,x_1,x_2)$ and let $I=(x_{0}^{3}x_{1},x_{0}x_{1}x_{2}^{2},x_{2}^{3},x_{1}^{3})$ be a monomial ideal. Computing its Klyachko diagram we obtain $s_{0}=s_{1}=s_{2}=0$, $\Delta^{0}=\{(0,0),(0,1),(0,2)\}$ and $\Delta^{1}=\Delta^{2}=\emptyset$. From Proposition \ref{Corollary:local HH1} we obtain:
\[
\begin{array}{ll}
H^{1}_{B}(I)_{0}=0
	&H^{1}_{B}(I)_{3}=\CC\langle x_{0}^{2}x_{1},x_{0}x_{1}^{2},x_{0}x_{1}x_{2},x_{1}^{2}
			x_{2},x_{1}x_{2}^{2}\rangle\\[1mm]
H^{1}_{B}(I)_{1}=\CC\langle x_{1}\rangle
	&H^{1}_{B}(I)_{4}=\CC\langle x_{0}^{2}x_{1}^{2},x_{0}^{2}x_{1}x_{2},x_{0}x_{1}^{2}
		x_{2},x_{1}^{2}x_{2}^{2}\rangle\\[1mm]
H^{1}_{B}(I)_{2}=\CC\langle x_{0}x_{1},x_{1}^{2},x_{1}x_{2}\rangle
	&H^{1}_{B}(I)_{5}=\CC\langle x_{0}^{2}x_{1}^{2}x_{2}\rangle\\[1mm]
\multicolumn{2}{c}{H^{1}_{B}(I)_{j}=0\quad \text{for}\quad j\geq 6.\hspace*{5mm}}
\end{array}
\]
\end{example}

\section{Application: Hilbert function of monomial ideals}
In this section, we show how to compute the Hilbert function and Hilbert polynomial of a $B-$saturated monomial ideal from its Klyachko diagram. As a consequence we develop a formula for the Hilbert polynomial in terms of the Klyachko diagram.

For any ray $\rho\in\Sigma(1)$, recall that $\cC_{0}^{\rho}:=\{m\in M| \langle m,\rho\rangle\geq0\}$,  $\cC^{\sigma}_{0}=\bigcap_{\rho\in\sigma(1)}\cC^{\rho}$ for $\sigma\in\Sigma$. Recall by Lemma \ref{Lemma:KlyFilt for monomial} that $\{\cC^{\sigma}_{0},\emptyset)\}_{\sigma\in\Sigma}$ is the Klyachko diagram of the monomial ideal $(1)$, or equivalently of $R$. For any multidegree $D\in\ZZ^{r}$, we define $\cC_{0}(D):=\bigcap_{\sigma\in\Sigma_{\max}}\cC_{0}^{\sigma}(D)$, such that $\cC_{0}(D)$ gives a monomial basis of $R_{[D]}$ (see Proposition \ref{Lemma:generators of I}). The following result tells us how to compute the value of the Hilbert function of $I$ from this description.

\begin{proposition}\label{Prop:Hilbert function and Klyachko diagram}
Let $I$ be a $B-$saturated monomial ideal with Klyachko diagram $\{(\cC^{\sigma}_{I},\Delta^{\sigma}_{I})\}_{\sigma\in\Sigma}$. Then, the Hilbert function of $I$ is given by
\[
h_{R/I}(\alpha)=
\left|
\bigcup_{\sigma\in\Sigma_{\max}}\left((\Delta^{\sigma}_{I}(\overline{\alpha})\cap\cC_{0}(\overline{\alpha}))\cup(\cC_{0}(\overline{\alpha})\setminus \cC^{\sigma}_{I}(\overline{\alpha}))\right)
\right|
\]
for any $\alpha\in\Cl(X)$.
\end{proposition}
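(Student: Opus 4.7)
The plan is to compute $h_{R/I}(\alpha) = \dim_{\CC}R_{\alpha} - \dim_{\CC}I_{\alpha}$ by producing explicit monomial bases of both graded pieces and then identifying the complement of the second basis inside the first with the union displayed in the statement. Fix a lift $\overline{\alpha}\in\ZZ^{r}$ of $\alpha\in\Cl(X)$ (as in the setup before Lemma~\ref{Lemma:generators of I}; by Remark~\ref{Remark:KlyDec for a shift}, the resulting cardinalities are independent of the choice).

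First, I would recall that the ideal $R=(1)$ has Klyachko diagram $\{(\cC_{0}^{\sigma},\emptyset)\}_{\sigma\in\Sigma}$ by Lemma~\ref{Lemma:KlyFilt for monomial}. Applying Lemma~\ref{Lemma:generators of I} to this trivial case yields the monomial basis $R_{[\overline{\alpha}]}=\CC\langle \ux^{m+\overline{\alpha}}\mid m\in\cC_{0}(\overline{\alpha})\rangle$. Applying the same lemma to the $B$-saturated ideal $I$ gives the monomial basis
\[
I_{[\overline{\alpha}]}=\CC\Big\langle \ux^{m+\overline{\alpha}}\;\Big|\; m\in\bigcap_{\sigma\in\Sigma_{\max}}\bigl(\cC^{\sigma}_{I}(\overline{\alpha})\setminus\Delta^{\sigma}_{I}(\overline{\alpha})\bigr)\Big\rangle.
\]
Since $s_{\rho}\geq 0$ for every ray, $\cC^{\sigma}_{I}(\overline{\alpha})\subset \cC^{\sigma}_{0}(\overline{\alpha})$, so the index set of $I_{[\overline{\alpha}]}$ is contained in $\cC_{0}(\overline{\alpha})$. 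Hence $h_{R/I}(\alpha)$ equals the cardinality of the set-theoretic complement of the $I$-index set inside $\cC_{0}(\overline{\alpha})$.

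The remaining step is a set-theoretic manipulation. Writing $A_{\sigma}:=\cC^{\sigma}_{I}(\overline{\alpha})\setminus\Delta^{\sigma}_{I}(\overline{\alpha})$, De~Morgan's law gives
\[
\cC_{0}(\overline{\alpha})\setminus\bigcap_{\sigma\in\Sigma_{\max}}A_{\sigma}=\bigcup_{\sigma\in\Sigma_{\max}}\bigl(\cC_{0}(\overline{\alpha})\setminus A_{\sigma}\bigr),
\]
and for each maximal cone $\sigma$,
\[
\cC_{0}(\overline{\alpha})\setminus A_{\sigma}=\cC_{0}(\overline{\alpha})\cap\bigl((\cC^{\sigma}_{I}(\overline{\alpha}))^{c}\cup\Delta^{\sigma}_{I}(\overline{\alpha})\bigr)=\bigl(\cC_{0}(\overline{\alpha})\setminus \cC^{\sigma}_{I}(\overline{\alpha})\bigr)\cup\bigl(\Delta^{\sigma}_{I}(\overline{\alpha})\cap\cC_{0}(\overline{\alpha})\bigr),
\]
using that $\Delta^{\sigma}_{I}\subset \cC^{\sigma}_{I}\subset \cC_{0}^{\sigma}$. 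Substituting this back and taking cardinalities gives exactly the claimed formula.

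The argument is mostly bookkeeping; the only point that requires a small justification is the inclusion $\bigcap_{\sigma}A_{\sigma}\subset\cC_{0}(\overline{\alpha})$, which I would establish from the inequalities $s_{\rho}\geq 0$ defining $\cC^{\sigma}_{I}$. Everything else reduces to Lemma~\ref{Lemma:generators of I} applied to $R$ and $I$, and to the two elementary identities above.
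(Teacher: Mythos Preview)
Your proposal is correct and follows essentially the same route as the paper: both arguments apply Lemma~\ref{Lemma:generators of I} to $R$ and to $I$ to obtain explicit index sets for the monomial bases of $R_{\alpha}$ and $I_{\alpha}$, and then rewrite the complement $\cC_{0}(\overline{\alpha})\setminus\bigcap_{\sigma}A_{\sigma}$ via De~Morgan and the identity $\cC_{0}(\overline{\alpha})\setminus A_{\sigma}=(\cC_{0}(\overline{\alpha})\setminus\cC^{\sigma}_{I}(\overline{\alpha}))\cup(\Delta^{\sigma}_{I}(\overline{\alpha})\cap\cC_{0}(\overline{\alpha}))$. Your version is slightly more explicit about the inclusion $\bigcap_{\sigma}A_{\sigma}\subset\cC_{0}(\overline{\alpha})$ and the independence of the lift $\overline{\alpha}$, but the strategy is identical.
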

\begin{proof}
By Lemma \ref{Lemma:generators of I}, there is a bijection between a monomial basis of $I_{\alpha}$ (respectively of $R_{\alpha}$) and $\bigcap_{\sigma\in\Sigma_{\max}}(\cC^{\sigma}_{I}(\overline{\alpha})\setminus\Delta^{\sigma}_{I}(\overline{\alpha}))$ (respectively  $\cC_{0}(\overline{\alpha})$). Thus, 
\[\begin{array}{ll>{\displaystyle}l}
h_{R/I}(\alpha) & = & \left|\cC_{0}(\overline{\alpha})\setminus \left(
\bigcap_{\sigma\in\Sigma_{\max}}(\cC^{\sigma}_{I}(\overline{\alpha})\setminus\Delta^{\sigma}_{I}(\overline{\alpha}))
\right)\right|\\[0.5cm]
& = & \left|
\bigcup_{\sigma\in\Sigma_{\max}}\cC_0(\overline{\alpha})\setminus (\cC^{\sigma}_{I}(\overline{\alpha})\setminus \Delta^{\sigma}_{I}(\overline{\alpha}))
\right|\\[0.5cm]
& = &  \left|
\bigcup_{\sigma\in\Sigma_{\max}}\left((\Delta^{\sigma}_{I}(\overline{\alpha})\cap\cC_{0}(\overline{\alpha}))\cup(\cC_{0}(\overline{\alpha})\setminus \cC^{\sigma}_{I}(\overline{\alpha}))\right)
\right|.
\end{array}
\]
\end{proof}

In the following we remark how the above formula simplifies when $I$ is an ideal generated by monomials with no common factor.

\begin{remark}\label{Remark:sI = 0} Let $I \subset R$ be a monomial ideal and $x^{\uk} \in R$ a monomial. We recall that the Hilbert function of $J = x^{\uk}I$ is 
\begin{equation}\label{Eq:Hilbert function identity}
h_{R/J}(\alpha) = h_{R}(\alpha)-h_{R}(\alpha-[\uk]) + h_{R/I}(\alpha-[\uk]).
\end{equation}
Thus, we can assume that the Klyachko diagram of $I$ has $s_{\rho} = 0$ for any $\rho \in \Sigma(1)$ and, its Hilbert function is 
\begin{equation}\label{Eq:Hilbert function sI=0}
h_{R/I}(\alpha)=
\left|
\bigcup_{\sigma\in\Sigma_{\max}}\left(\Delta^{\sigma}(\overline{\alpha})\cap\cC_{0}(\overline{\alpha})\right)
\right|.
\end{equation}
Otherwise, $I = \left(\prod_{\rho \in \Sigma(1)} x_{\rho}^{s_{\rho}}\right) I_0$ where $I_0$ is a monomial ideal with $s_{\rho} = 0$ for any $\rho \in \Sigma(1)$, and we can compute the Hilbert function of $I$ using (\ref{Eq:Hilbert function identity}). 
\end{remark}

\begin{example}
Let $R=\CC[x_0,x_1,x_2]$ be the Cox ring of $\PP^{2}$ and $I=(x_2^2,$ $x_0x_2,x_0x_1)$ as in Example \ref{Example:Klyachko diagram algorithm} (i). For any $a\in\ZZ$, we set $\overline{a}=(a,0,0)$ and
\[
\Delta^{\sigma_{0}}_{I}(\overline{a})=\{(0,0)\}\quad\Delta^{\sigma_{1}}_{I}(\overline{a})=\{(a,0),(a-1,1)\}\quad
\Delta^{\sigma_{2}}_{I}(\overline{a})=\emptyset.
\]
Since $s_0=s_1=s_2=0$, by (\ref{Eq:Hilbert function sI=0}) we have the following Hilbert function:
\[
h_{R/I}(t)=
\left\{
\begin{array}{@{}ll@{}}
0,&t\leq-1\\
1,&t=0\\
3,&t\geq1.
\end{array}
\right.
\]
In particular, the Hilbert polynomial of $R/I$ is $P_{R/I}\equiv 3$ constant.
\end{example}

In the following result we characterize the Klyachko diagram of a monomial ideal $I$ with constant Hilbert polynomial. In particular, notice that $I$ is necessarily generated by monomials without common factors.

\begin{corollary}\label{Corollary:HilbPoly}
 Let $I$ be a monomial ideal with Klyachko diagram $\{(\cC^{\sigma}_{I},\Delta^{\sigma}_{I})\}_{\sigma \in \Sigma}$. Then, the Hilbert polynomial $P_{R/I}$ of $I$ is constant if and only if $s_{\rho} = 0$ for any $\rho \in \Sigma(1)$ and $\Delta^{\sigma}_{I}$ is finite for any $\sigma \in \Sigma_{\max}$. Moreover, 
\[P_{R/I}(\alpha) = \sum_{\sigma \in \Sigma_{\max}} |\Delta^{\sigma}_{I}|.\]
\end{corollary}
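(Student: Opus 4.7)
The strategy is to apply Proposition \ref{Prop:Hilbert function and Klyachko diagram}, after reducing to the case in which $I$ has no common monomial factor. If some $s_{\rho}>0$, Remark~\ref{Remark:sI = 0} writes $I=x^{\underline{s}}I_{0}$ with $s_{\rho}(I_{0})=0$ for all $\rho$, and identity~(\ref{Eq:Hilbert function identity}) gives
\[
h_{R/I}(\alpha)=h_{R}(\alpha)-h_{R}(\alpha-[\underline{s}])+h_{R/I_{0}}(\alpha-[\underline{s}]).
\]
Since $X$ is complete, $\underline{s}\neq 0$ implies $[\underline{s}]\neq 0$ (there are no non-constant global sections of $\cO_{X}$), and therefore $P_{R}(\alpha)-P_{R}(\alpha-[\underline{s}])$ is a non-zero polynomial of degree $n-1$ in $\alpha$, where $n=\dim X$. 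Granting that $P_{R/I_{0}}$ is constant under the hypotheses of the corollary, which the sufficiency part below establishes, this forces $P_{R/I}$ to be non-constant unless $\underline{s}=0$, reducing everything to the case $s_{\rho}=0$ for every $\rho$.

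In that case, Remark~\ref{Remark:sI = 0} rewrites the Hilbert function as
\[
h_{R/I}(\alpha)=\left|\bigcup_{\sigma\in\Sigma_{\max}}\bigl(\Delta^{\sigma}_{I}(\overline{\alpha})\cap\cC_{0}(\overline{\alpha})\bigr)\right|.
\]
For sufficiency, assume each $\Delta^{\sigma}_{I}$ is finite, fix an ample class $[A]\in\Cl(X)$, and set $\alpha=k[A]$. By Remark~\ref{Remark:KlyDec for a shift} together with the smoothness of $X$, we have $\Delta^{\sigma}_{I}(\overline{\alpha})=\Delta^{\sigma}_{I}+\tau_{\sigma}$, where $\tau_{\sigma}\in M$ is the unique character with $\langle\tau_{\sigma},\rho\rangle=a_{\rho}$ for $\rho\in\sigma(1)$. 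The points $\{\tau_{\sigma}\}_{\sigma\in\Sigma_{\max}}$ are the vertices of the moment polytope $\cC_{0}(\overline{\alpha})$, pairwise distinct because $[A]$ is ample, and they scale linearly with $k$. Since each $\Delta^{\sigma}_{I}$ is a fixed finite subset of $\cC^{\sigma}_{0}$, for $k\gg 0$ the translates $\Delta^{\sigma}_{I}+\tau_{\sigma}$ become pairwise disjoint and each lies entirely inside $\cC_{0}(\overline{\alpha})$. Hence $h_{R/I}(k[A])=\sum_{\sigma\in\Sigma_{\max}}|\Delta^{\sigma}_{I}|$ for $k\gg 0$, yielding both constancy of $P_{R/I}$ and the formula.

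For necessity (still under $s_{\rho}=0$), suppose some $\Delta^{\sigma_{0}}_{I}$ is infinite. Then $\Delta^{\sigma_{0}}_{I}\subset\cC^{\sigma_{0}}_{0}$ is an unbounded lattice subset of an $n$-dimensional simplicial cone, so for $\alpha=k[A]$ with $k$ large, the cardinality of $\Delta^{\sigma_{0}}_{I}(\overline{\alpha})\cap\cC_{0}(\overline{\alpha})$ grows without bound in $k$, forcing $h_{R/I}$ to be unbounded and $P_{R/I}$ non-constant. The main obstacle is the geometric separation step in the sufficiency direction: verifying that the vertices $\tau_{\sigma}$ of the moment polytope of a large ample class lie arbitrarily far apart, so that the finite obstruction sets $\Delta^{\sigma}_{I}$ placed at these vertices do not overlap inside $\cC_{0}(\overline{\alpha})$. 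This is a classical feature of moment polytopes of ample divisors on smooth toric varieties, but it requires an explicit estimate to justify the bound ``$k\gg 0$''.
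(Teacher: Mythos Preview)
Your sufficiency argument and your argument that $\Delta^\sigma_I$ must be finite (once $s_\rho=0$) are correct and follow the paper's strategy; you simply make explicit the moment-polytope separation that the paper leaves implicit when it says ``follows directly from Proposition~\ref{Prop:Hilbert function and Klyachko diagram} and Remark~\ref{Remark:sI = 0}.''

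The reduction showing $s_\rho=0$ in the necessity direction, however, has a genuine gap. You factor $I=x^{\underline{s}}I_0$ and then write ``Granting that $P_{R/I_0}$ is constant under the hypotheses of the corollary, which the sufficiency part below establishes.'' But sufficiency requires \emph{both} $s_\rho(I_0)=0$ \emph{and} finiteness of every $\Delta^\sigma_{I_0}$. You have the first, but the second is precisely part of what necessity is supposed to prove (note $\Delta^\sigma_{I_0}$ and $\Delta^\sigma_I$ are translates of one another). If some $\Delta^\sigma_{I_0}$ is infinite, $P_{R/I_0}$ can have degree $n-1$, and nothing in your argument rules out its leading term cancelling that of $P_R(\alpha)-P_R(\alpha-[\underline{s}])$. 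So the contradiction does not close as written.

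The paper sidesteps this by arguing directly from Proposition~\ref{Prop:Hilbert function and Klyachko diagram}: if $s_\rho>0$, choose $\sigma\in\Sigma_{\max}$ with $\rho\in\sigma(1)$; then the set $\cC_0(\overline{\alpha})\setminus\cC^\sigma_I(\overline{\alpha})$ in the Hilbert-function formula already grows with $\alpha$, so $P_{R/I}$ cannot be constant --- no passage through $I_0$ is needed. If you prefer to keep your reduction, you can repair it by a positivity argument: for $\alpha=k[A]$ with $A$ ample and $k\gg 0$ one has $P_{R/I_0}(\alpha-[\underline{s}])=h_{R/I_0}(\alpha-[\underline{s}])\geq 0$, while $P_R(k[A])-P_R(k[A]-[\underline{s}])$ has leading term $k^{n-1}\,A^{n-1}\!\cdot[\underline{s}]/(n-1)!>0$ (ample meets a nonzero effective class positively), so $P_{R/I}(k[A])\to\infty$, giving the contradiction without assuming anything about $P_{R/I_0}$.
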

\begin{proof}
The left implication follows directly from Proposition \ref{Prop:Hilbert function and Klyachko diagram} and Remark \ref{Remark:sI = 0}. Conversely, if $s_{\rho}>0$ for some $\rho\in\Sigma(1)$, then there is $\sigma\in\Sigma_{\max}$ such that $\rho\in\sigma(1)$ and $\cC_{0}(\overline{\alpha})\setminus \cC^{\sigma}_{I}(\overline{\alpha})$ increases with $\alpha$, and $P_{R/I}$ would not be constant. Now, assume that there is some $\sigma \in \Sigma_{\max}$ such that $\Delta^{\sigma}_{I}$ is not finite. By construction, $\Delta^{\sigma}_{I}$ contains a set $\Delta'$ of the form 
\[
\{m \in M \mid \langle m, \rho_{i_{1}} \rangle\!=\!k_{1}, \hdots, \langle m, \rho_{i_{l}} \rangle\!=\!k_{l}, \langle m,\rho_{i_{l+1}} \rangle\!\geq\!k_{l+1}, \hdots, \langle m,\rho_{i_{c}} \rangle\!\geq\!k_{c}\}.
\]
Since $\Delta'(\overline{\alpha})$ is not bounded in $\cC_{0}^{\sigma}(\overline{\alpha})$, the number of points in $\Delta'(\overline{\alpha}) \cap \cC_{0}(\overline{\alpha})$ increases with $\alpha$ for $\alpha \gg 0$. Therefore, it follows from (\ref{Eq:Hilbert function sI=0}) that the Hilbert function $h_{R/I}(\alpha)$ of $I$ increases with $\alpha$ for $\alpha \gg 0$. 
\end{proof}

\begin{remark}
Notice that by Corollary \ref{Corollary:HilbPoly} we have characterized all monomial ideals $I\subset R$ with $\dim R/I =1$, in terms of the Klyachko diagram.
\end{remark}

We finish by illustrating Corollary \ref{Corollary:HilbPoly} with the following example.
\begin{example}
Let $R\!=\!\CC[x_0,x_1,x_2,x_3]$ be the Cox ring of $\PP^3$ and $I\!\!=\!\!(x_0x_1,x_2^2,x_1x_2x_3^2)$ as in Example \ref{Example:Klyachko diagram algorithm} (ii). For any $a\in\ZZ$ we set $\overline{a}$ and we have,
\[
\begin{array}{l@{}}
\Delta^{\sigma_{0}}_{I}(\overline{a})=\{(0,d_{2},d_{3})\mid0\leq d_{2},d_{3}\leq 1\}\cup\{(0,d_{2},d_{3})\mid d_{3}\geq 2,0\leq d_{2}\leq 1\}\\
\Delta^{\sigma_{1}}_{I}(\overline{a})=\{(a-d_{2}-d_{3},d_{2},d_{3})\mid0\leq d_{2},d_{3}\leq 1\}\cup
\{(a-d_{3},0,d_{3})\mid d_{3}\geq2\}\\
\Delta^{\sigma_{2}}_{I}(\overline{a})=\emptyset\\
\Delta^{\sigma_{3}}_{I}(\overline{a})=\{(0,d_{2},d_{3})\mid 0\leq d_{2}\leq 1, d_{3}\leq a-d_{2}\}\cup
\{(d_{1},0,a-d_{1})\mid d_{1}\geq 1\}
\end{array}
\]
Counting the number of different points in $\bigcup_{i=0}^{3}\Delta^{\sigma_{i}}_{I}(\overline{a})$, we obtain that $h_{R/I}(0)=1$, $h_{R/I}(1)=4$, $h_{R/I}(2)=8$, and for $a\geq3$, $h_{R/I}(a)=3(a+1)$, for $a\geq 3$. Thus, the Hilbert polynomial of $I$ is $P_{R/I}(a)=3(a+1)$.
\end{example}

\section{Declarations}

{\bf Funding or conflicts of interests.}
The authors have no competing interests to declare that are relevant to the content of this article.

\vspace{5mm}
{\bf Data availability.} Data sharing not applicable to this article as no datasets were generated or analysed during the current study.


\begin{thebibliography}{ll}

\bibitem{Bayer-Sturmfels} D. Bayer and B. Sturmfels. {\em Cellular resolutions of monomial modules}. J. Reine
Angew. Math. {\bf 502} (1998), 123--140.

\bibitem{Batyrev-Cox} V. V. Batyrev and D. A. Cox {\em On the Hodge structure of projective hypersurfaces in toric varieties} Duke Math. J. {\bf 75:2} (1994), 293--338.

\bibitem{Cox1} D. A. Cox, {\em The Homogeneous Coordinate Ring of a Toric Variety}. J. Algebr. Geom. {\bf 4:1} (1995), 17--50.

\bibitem{CLS} D. A. Cox, J. B. Little and H. K. Schenck {\em Toric varieties} GSM {\bf 124} American Mathematical Society, 2011.

\bibitem{Das-Dey-Khan} J. Dasgupta, A. Dey, B. Khan, {\em Stability of Equivariant Vector Bundles over Toric Varieties} Doc. Math. {\bf 25} (2020), 1787--1833.

\bibitem{Di-Rocco-Jabbusch-Smith} S. Di Rocco, K. Jabbusch and G. G. Smith, {\em Toric vector bundles and parliaments of polytopes} Trans. Amer. Math. Soc. {\bf 370} (2018), 7715--774.

\bibitem{Eagon-Reiner} J. A.  Eagon  and  V.  Reiner.   {\em Resolutions  of  Stanley-Reisner  rings  and  Alexander duality}. J. Pure Appl. Algebra. {\bf 130:3} (1998), 265--275

\bibitem{EisenbudBook} D. Eisenbud, {\em Commutative algebra with a view toward algebraic geometry}. GTM {\bf 150}, Springer, 2004.

\bibitem{Eis-Mus-Sti} D. Eisenbud, M. Musta\c{t}\u{a} and M. Stillman {\em Cohomology on Toric Varieties and Local Cohomology with Monomial Supports}J. Symbolic Comput. {\bf 29:4-5} (2000), 583--600.

\bibitem{Eliahou-Kervaire} S. Eliahou and M. Kervaire {\em Minimal resolutions of some monomial ideals}. J.
Algebra. {\bf 129:1} (1990), 1--25.

\bibitem{Faridi} S. Faridi, {\em Facet ideal of a simplicial complex}. manuscr. math. {\bf 109:2} (2002), 159--174.

\bibitem{Hei-Rat-Shah} W. Heinzer, L. J. Ratliff Jr., K. Shah
{\em Parametric decompositions of monomial ideals (I)}. Houston J. Math. {\bf 21:1} (1995), 29--52 

\bibitem{Herzog-Hibi} J. Herzog and T. Hibi, {\em Monomial ideals}. GTM {\bf 260} Springer, 2011.

\bibitem{M2} D. R. Grayson and M. E. Stillman, Macaulay2, a software system for research in Algebraic Geometry. Available
at http://www.math.uiuc.edu/Macaulay2/.

\bibitem{Her-Mus-Pay} M. Hering, M. Musta\c{t}\u{a} and S. Payne, {\em Positivity properties of toric vector bundles}. Annales de l'Institut Fourier, {\bf 60:2} (2010), 607--640. 

\bibitem{Khan-Das} B. Khan and J. Dasgupta, {\em Toric vector bundles on Bott tower}. Bull. des Sci. Math. {\bf 155} (2019), 74--91.

\bibitem{Kly89} A. A. Klyachko, {\em Equivariant bundles on toral varieties.} Izv. Akad. Nauk SSSR Ser. Mat. {\bf 35:2} (1990), 337--375.

\bibitem{Kly91} A. A. Klyachko, {\em Vector Bundles and Torsion Free Sheaves on the Projective Plane}. Preprint Max Planck Institut für Mathematik, (1991).

\bibitem{Macaulay} F. S. Macaulay, {\em Some properties of enumeration in the theory of modular systems}. Proc. London Math. Soc. {\bf 26} (1927), 531--555.

\bibitem{MR-S} R. M. Mir\'o-Roig and M. Salat, {\em Multigraded Castelnuovo-Mumford regularity via Klyachko filtrations}. Forum Mathematicum, {\bf 34:1} (2022), pp. 41-60.

\bibitem{Pay} S. Payne, {\em Moduli of toric vector bundles} Compositio Math. {\bf 144:5} (2008) 1199--1213

\bibitem{Perling} M. Perling, {\em Resolutions and Moduli for Equivariant Sheaves over Toric Varieties}, PhD thesis, Universit\"at Kaiserslautern, 2003

\bibitem{Taylor} D. Taylor. {\em Ideals Generated By Monomials in an R-sequence}. PhD thesis, University of Chicago, 1961.
\end{thebibliography}
\end{document}